\newcommand{\eps}{\varepsilon}
\newcommand{\R}{\mathbb{R}}
\newcommand{\Z}{\mathbb{Z}}
\newcommand{\RN}{{\mathbb{R}^N}}
\newcommand{\RT}{{\mathbb{R}^3}}
\renewcommand{\le}{\leqslant}
\renewcommand{\ge}{\geqslant}
\renewcommand{\a }{\alpha }
\renewcommand{\d }{\delta }
\newcommand{\g }{\gamma }
\renewcommand{\l }{\lambda}
\newcommand{\n }{\nabla }
\renewcommand{\t}{\theta}
\renewcommand{\O}{\Omega}
\newcommand{\G}{\Gamma}
\renewcommand{\H}{H^1(\RT)}
\newcommand{\Hr}{H^1_r(\RT)}
\newcommand{\Ne}{\mathcal{N}}
\renewcommand{\o}{\omega}
\newcommand{\D }{{\mathcal D}^{1,2}(\RT)}
\newcommand{\irt }{\int_{\RT}}
\def\bbm[#1]{\mbox{\boldmath $#1$}}
\newtheorem{theorem}{Theorem}[section]
\newtheorem{lemma}[theorem]{Lemma}
\newtheorem{remark}[theorem]{Remark}
\renewenvironment{proof}{\noindent{\textit{Proof.~}}}{$\hfill\square$\vspace{0.2 cm}\\}
\newenvironment{proofth}{\noindent{\textit{Proof of Theorem \ref{th}.~}}}{$\hfill\square$\vspace{0.2 cm}\\}
\newenvironment{proofmain}{\noindent{\textit{Proof of Theorem \ref{main}.~}}}{$\hfill\square$\vspace{0.2 cm}\\}
\title{Improved estimates and a limit case for the electrostatic Klein-Gordon-Maxwell system\footnote{The authors are supported by M.I.U.R. -
P.R.I.N. ``Metodi variazionali e topologici nello studio di
fenomeni non lineari''}}
\author{A. Azzollini \thanks{Dipartimento di Matematica ed Informatica, Universit\`a degli
Studi della Basilicata,  Via dell'Ateneo Lucano 10, I-85100
Potenza, Italy, e-mail: {\tt antonio.azzollini@unibas.it}}
 \; \& \;
L. Pisani\thanks{Dipartimento di Matematica, Universit\`a degli Studi di Bari, Via E. Orabona 4, I-70125 Bari, Italy, e-mail: {\tt pisani@dm.uniba.it}}
 \; \& \;
A. Pomponio\thanks{Dipartimento di Matematica, Politecnico di
Bari, Via E. Orabona 4, I-70125 Bari, Italy, e-mail: {\tt
a.pomponio@poliba.it}}}
\date{}
\begin{document}

\maketitle

\begin{abstract}
We study the class of nonlinear Klein-Gordon-Maxwell systems describing a standing wave (charged matter field) in equilibrium with a purely electrostatic field. We improve some previous existence results in the case of an homogeneous nonlinearity. Moreover, we deal with a limit case, namely when the frequency of the standing wave is equal to the mass of the charged field; this case shows analogous features of the well known \textquotedblleft zero mass case\textquotedblright for scalar field equations. 
\end{abstract}

\section{Introduction}

This paper is concerned with a class of Klein-Gordon-Maxwell systems written as follows
\begin{equation}    \label{main eq}
\left\{
\begin{array}{ll}
-\Delta u +[m^2-(e\phi-\o)^2 ]u-f'(u)=0 & \hbox{in }\RT
\\
\Delta \phi=e(e\phi-\o) u^2 & \hbox{in }\RT.
\end{array}
\right.
\end{equation}
This system was introduced in the pioneering work of Benci and Fortunato \cite{BF2} in 2002. It represents a standing wave $\psi=u(x)e^{i\omega t}$ (charged matter field) in equilibrium with a purely electrostatic field $\mathbf{E}=-\nabla \phi (x)$.
The constant $m\geq 0$ represents the mass of the charged field and $e$ is the coupling constant introduced in the minimal coupling rule \cite{F}.

It is immediately seen that \eqref{main eq} deserves some interest as system if and only if $e\ne0$ and $\o\ne 0$,
otherwise we get $\phi = 0$. Through the paper we are looking for \emph{nontrivial solutions}, that is solutions such that $\phi \ne 0$.

Moreover we point out that the sign of $\o$ is not relevant for the existence of solutions. Indeed if $(u,\phi)$ is a solution of \eqref{main eq} with a certain value of $\o$, then $(u,-\phi)$ is a solution corresponding to $-\o$. So, without loss of generality, we shall assume $\o>0$. Analogously the sign of $e$ is not relevant, so we assume $e>0$. Actually the results we are going to prove do not depend on the value of $e$.

Let us recall some previous results that led us to the present research.
The first results are concerned with an homogeneous nonlinearity
$f(t)=\frac{1}{p}|t|^p$.
Therefore \eqref{main eq} becomes
\begin{equation}    \label{eq}
\left\{
\begin{array}{ll}
-\Delta u +[m^2-(e\phi-\o)^2 ]u-|u|^{p-2}u=0 & \hbox{in }\RT
\\
\Delta \phi=e(e\phi-\o) u^2 & \hbox{in }\RT.
\end{array}
\right.
\end{equation}

As we said before, the first result is due to Benci and Fortunato \cite{BF2}. They showed the existence of infinitely many solutions whenever $p\in(4,6)$ and $0< \o < m$.

In 2004 D'Aprile and Mugnai published  two papers on this topic. In \cite{DM} they proved the existence of nontrivial solutions of (\ref{main eq}) when $p\in(2,4]$ and $\o$ varies in a certain range depending on $p$:
\[
0< \o < m \, g_0(p)
\]
where
\begin{equation*}% \label{function g}
g_0(p)=\sqrt{\frac{p-2}{2}}.
\end{equation*}
Afterwards, in \cite{DM2}, the same authors showed that (\ref{main eq}) has no nontrivial solutions if $p\geq 6$ and $\omega\in (0,m]$ (or $p\leq 2$).

Our first result gives a little improvement on problem \eqref{main eq} with $p\in(2,4)$.
\begin{theorem}\label{th}
Let $p\in (2,4)$. Assume that $0< \o <  m\,g(p)$ where
\[
g(p)=\left\{
\begin{array}
[c]{ll}%
\sqrt{(p-2)(4-p)}\qquad & \text{if }2<p<3,\\
1 & \text{if }3\leq p<4,
\end{array}
\right.
\]
then \eqref{eq} admits a nontrivial weak solution $(u,\phi)\in \H\times \D$.
\end{theorem}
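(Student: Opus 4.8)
The plan is to recast \eqref{eq} as a single equation in $u$ by means of the Benci--Fortunato reduction. For fixed $u\in\H$ the second equation reads $-\Delta\phi+e^2u^2\phi=e\o u^2$, which is linear and uniformly elliptic; by Lax--Milgram it has a unique solution $\phi_u\in\D$, a maximum/comparison principle yields the pointwise bounds $0\le e\phi_u\le\o$, and a standard argument shows that $u\mapsto\phi_u$ is of class $C^1$. Since $\phi_u$ is the unique maximiser of the (strictly concave) dependence of the action on $\phi$, substituting $\phi=\phi_u$ produces the reduced functional
\[
J(u)=\frac12\irt\bigl(|\n u|^2+(m^2-\o^2)u^2\bigr)\,dx+\frac{e\o}{2}\irt\phi_u u^2\,dx-\frac1p\irt|u|^p\,dx ,
\]
whose nonzero critical points correspond, via $\phi=\phi_u$, to nontrivial weak solutions of \eqref{eq}. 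Because the problem is invariant under rotations I would work in the radial subspace $\Hr$ and appeal to the principle of symmetric criticality, so as to have the \emph{compact} embedding $\Hr\hookrightarrow L^p(\RT)$, $2<p<6$, at my disposal.

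I would then check that $J$ possesses the mountain--pass geometry. When $\o<m$ the quadratic part is equivalent to $\|\cdot\|_{\H}^2$ and the nonlocal term $\frac{e\o}{2}\irt\phi_u u^2$ is nonnegative, so $J(u)\ge\delta\|u\|_{\H}^2-C\|u\|_{\H}^p$ on a small sphere and $J$ stays positive there; on the other hand the bound $e\phi_u\le\o$ forces the nonlocal term to grow at most quadratically, whence $J(tu)\to-\infty$ as $t\to+\infty$ for every $u\ne0$. The mountain--pass theorem then furnishes a Palais--Smale sequence $(u_n)\subset\Hr$ at a level $c>0$.

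The decisive step --- and the one controlled by the hypothesis $\o<m\,g(p)$ --- is the boundedness of $(u_n)$ and the Palais--Smale condition. Here I would lean on the algebraic identities for $\phi_u$: writing $P(u)=\irt(\o-e\phi_u)u^2$ and $W(u)=\irt(\o-e\phi_u)^2u^2$, testing the $\phi$-equation with $\phi_u$ gives $\|\n\phi_u\|_2^2=\o P(u)-W(u)\ge0$, while Cauchy--Schwarz yields $W(u)\ge P(u)^2/\|u\|_2^2$ and the pointwise bounds give $0\le P(u)\le\o\|u\|_2^2$. Estimating a combination $J(u_n)-\tfrac1\theta J'(u_n)[u_n]$ from below with these relations produces, for $\o$ small enough, a coercive bound $\|u_n\|_{\H}\le C$. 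I expect the crude estimate $(\o-e\phi_u)^2\le\o^2$ to recover only the weaker threshold $g_0(p)$ of D'Aprile--Mugnai; obtaining the sharp $g(p)$ --- the actual improvement --- will require optimising jointly over the multiplier $\theta$ and the quantities $P(u),W(u)$ subject to the above constraints, supplemented where necessary by the Nehari and Pohozaev identities or by a scaling analysis of the fibering map $t\mapsto J(tu)$. This optimisation, whose two regimes account for the two branches $2<p<3$ and $3\le p<4$ of $g$, is the part I expect to be the main obstacle.

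Finally, with $(u_n)$ bounded I would extract $u_n\weakto u$ in $\Hr$, promote this to strong convergence in $L^p(\RT)$ by compactness, and use the continuity of $u\mapsto\phi_u$ together with the weak continuity of the linear part to pass to the limit in $J'(u_n)=o(1)$, obtaining $J'(u)=0$. The strict positivity of the level $c$ excludes $u\equiv0$, and setting $\phi=\phi_u\in\D$ delivers the nontrivial weak solution $(u,\phi)\in\H\times\D$.
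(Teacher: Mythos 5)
Your reduction, the mountain--pass geometry, and the final limiting argument all match the paper. The genuine gap is in what you yourself call ``the decisive step'': the boundedness of the Palais--Smale sequence at the improved threshold $g(p)$. You propose to close it by combining $J(u_n)-\tfrac1\theta J'(u_n)[u_n]$ with the Nehari \emph{and Pohozaev} identities, but a Palais--Smale sequence does not satisfy the Pohozaev identity --- that identity is available only for actual solutions (or with error terms one cannot control along a mere PS sequence). Without Pohozaev, the energy--Nehari combination together with the crude bounds $0\le e\phi_u\le\o$ is exactly what D'Aprile--Mugnai exploited, and it only reaches the threshold $g_0(p)=\sqrt{(p-2)/2}$; it does not yield the claimed $g(p)$. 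So the one step that carries the entire improvement of the theorem is deferred to an optimisation that cannot be run on the objects you have produced.

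The paper resolves precisely this obstruction with the Struwe--Jeanjean monotonicity trick, and this is where the two routes genuinely diverge. One perturbs the functional to $I_\l(u)=\frac12\irt |\n u|^2+(m^2-\o^2)u^2+e\o\phi_u u^2-\frac{\l}{p}\irt|u|^p$ for $\l\in[\d,1]$; Jeanjean's theorem gives, for almost every $\l$, a \emph{bounded} PS sequence for free, hence a nontrivial critical point $v_\l$ of $I_\l$. Taking $\l_n\nearrow 1$ one obtains genuine solutions $v_n$ of the perturbed problems, which \emph{do} satisfy the Pohozaev identity. Only then does one form the linear combination of the three identities (energy level, Pohozaev, Nehari) with a multiplier $\a$, and the condition that the resulting quadratic $A_{p,\a}e^2t^2+B_{p,\a}e\o t+C_{p,\a}(m^2-\o^2)$ be nonnegative on $[0,\o]$ is what produces, after minimising over $\a\in\bigl(\frac{2-p}{2(6-p)},\frac16\bigr)$, exactly the two branches of $g(p)$ (the case $3\le p<4$ corresponds to a choice of $\a$ killing $B_{p,\a}$, the case $2<p<3$ to the constraint $m^2/\o^2\ge 1/((p-2)(4-p))$). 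If you want to salvage your outline, replace the direct mountain--pass step by this two-parameter scheme; as written, the argument does not go through.
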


\begin{figure}[ht]
\centering
\includegraphics[height=6cm]{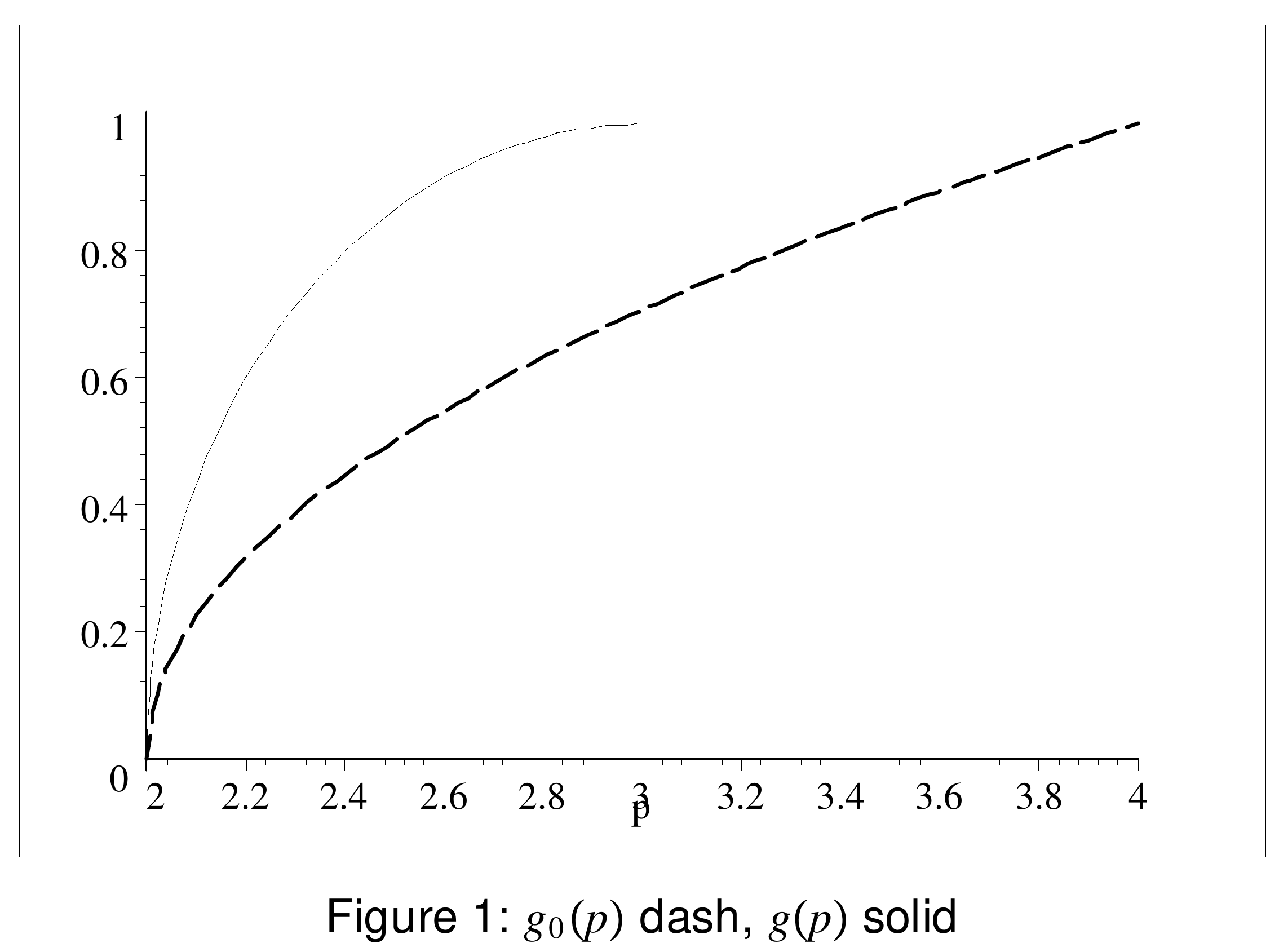}
\end{figure}

Under the above assumptions, the problem (\ref{eq}) is of a variational nature. Indeed its weak solutions $(u,\phi)\in \H\times \D$ can be characterized as critical points of the functional ${\cal S}:\H\times\D \to \R$ defined as
\[
{\cal S}(u,\phi)=\frac12 \irt |\n u|^2 -|\n \phi|^2+[m^2_0-(\o+e\phi)^2 ]u^2
-\frac{1}{p}\irt |u|^{p}.
\]

The first difficulty in dealing with the functional ${\cal S}$ is that it is strongly indefinite, namely it is unbounded both from below and from above on infinite dimensional subspaces.

To avoid this indefiniteness, we will use a well known reduction argument, stated in Theorem \ref{theo:red}. The finite energy solutions of \eqref{main eq} are pairs $(u,\phi_u)\in \H\times \D$, where $\phi_u\in \D$ is the unique solution of
\begin{equation} \label{seconda}
\Delta \phi=e(e\phi-\o) u^2 \quad \hbox{in }\RT
\end{equation}
(see Lemma \ref{le:phi}) and $u\in\H$ is a critical point of
\[
I(u)={\cal S}(u,\phi_u).
\]
The functional $I$ does not present anymore the strong indefiniteness. Under the assumptions of Theorem \ref{th}, it will be studied by using an indirect method developed by Struwe  \cite{S} and Jeanjean \cite{JJ}

In the second part of the paper we consider a more general nonlinearity $f(u)$.

Under usual assumptions, which describe behaviours analogous to $|t|^p$ (with $p\in(4,6)$), it is easy to get a generalization of the existence result (\cite{BF2}) of Benci and Fortunato; we shall state this generalization in Lemma \ref{main-1}. However we point out that all the quoted results share the assumption $\o < m$.

We are mainly interested to study the limit case $\o = m$, when \eqref{main eq} becomes
\begin{equation} \label{KGM}
\left\{
\begin{array}{ll}
-\Delta u +(2e\o\phi-e^2\phi^2 )u-f'(u)=0 & \hbox{in }\RT
\\
\Delta \phi=e(e\phi-\o) u^2 & \hbox{in }\RT.
\end{array}
\right.
\end{equation}
We notice that, in the first equation, besides the interaction term $(2e\o\phi-e^2\phi^2 )u$, there is no linear term in $u$. In this sense the situation described  by \eqref{KGM} is analogous to the \emph{zero-mass case} for nonlinear field equations (see e.g.~\cite{BL1}).

As in \cite{BL1}, in order to get solutions we need some stronger hypotheses on $f$, which force it to be inhomogeneous. More precisely we assume that $f:\R\to\R$ satisfies the following assumptions.
\begin{itemize}
\item[({\bf f1})] $f\in C^1(\R,\R)$;
\item[({\bf f2})] $\forall t \in\R\setminus\{0\}:\;\a f(t)\le f'(t)t$;
\item[({\bf f3})] $\forall t\in\R:$ $f(t)\ge C_1  \min(|t|^p,|t|^q)$;
\item[({\bf f4})] $\forall t\in\R:$ $|f'(t)|\le C_2 \min(|t|^{p-1},|t|^{q-1})$;
\end{itemize}
with $4<\a\le p<6<q$ and $C_1,$ $C_2,$ positive constants. We shall prove the following result

    \begin{theorem}\label{main}
        Assume that $f$ satisfies the above hypotheses, then there exists a couple $(u_0,\phi_0)\in \D\times\D$ which is a weak solution of \eqref{KGM}.
    \end{theorem}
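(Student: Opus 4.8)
My plan is to solve \eqref{KGM} by the Maxwell reduction already used for Theorem \ref{th}. By Lemma \ref{le:phi} and Theorem \ref{theo:red}, to each admissible $u$ one associates the unique $\phi_u$ solving the second equation, and one is reduced to finding critical points of $I(u)=\S(u,\phi_u)$ on the homogeneous space $\D$ (no $L^2$ term survives, since $\o=m$). The bound $0\le e\phi_u\le\o$ from Lemma \ref{le:phi} is what I would lean on throughout: it makes the interaction coefficient $2e\o\phi_u-e^2\phi_u^2=e\phi_u(2\o-e\phi_u)$ nonnegative and bounded by $\o^2$, so that in $I$ it plays the role of a harmless nonnegative potential. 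The first point requiring care is that in the zero-mass regime the map $u\mapsto\phi_u$ must be built on $\D$ rather than on $\H$: for a generic $u\in\D\hookrightarrow L^6(\RT)$ the source $u^2$ need not belong to $L^{6/5}(\RT)$, so $v\mapsto\irt u^2v$ may fail to be continuous on $\D$. I would therefore build $u\mapsto\phi_u$ on the functions $u\in\D$ with $u^2\in L^{6/5}(\RT)$ --- a class that contains $C^\infty_c(\RT)$ --- so that the reduction applies, and recover $(u,\phi_u)\in\D\times\D$ only at the end.

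Next I would check the mountain-pass geometry of $I$. From (f4) and $f(0)=0$ one gets $|f(t)|\le C\min(|t|^p,|t|^q)\le C|t|^6$ for all $t$: the supercritical power $q>6$ dominates where $|t|\le1$ and the subcritical power $p<6$ where $|t|\ge1$. Hence $\irt|f(u)|\le C\|u\|_6^6\le C'\|\n u\|_2^6$, and dropping the nonnegative interaction term gives $I(u)\ge\tfrac{1}{2}\|\n u\|_2^2-C'\|\n u\|_2^6$, bounded below by a positive constant on a small sphere of $\D$. For a fixed $u\in C^\infty_c(\RT)\setminus\{0\}$ the interaction term is $O(t^2)$ (here $u\in L^2$), whereas (f3) gives $\irt f(tu)\ge C_1t^p\int_{\{|u|\ge1/t\}}|u|^p$, which grows like $t^p$ with $p>4>2$; thus $I(tu)\to-\infty$ and the mountain-pass level $c$ is strictly positive.

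The core of the proof is compactness. Boundedness of a Palais--Smale sequence $(u_n)$ at level $c$ comes from (f2): in $I(u_n)-\tfrac{1}{\a}I'(u_n)[u_n]$ the inequality $\a f(t)\le f'(t)t$ makes the nonlinear part nonnegative, $\tfrac{1}{2}-\tfrac{1}{\a}>0$ controls $\|\n u_n\|_2^2$, and the threshold $\a>4$ is precisely what forces the correct sign of the Maxwell/interaction contribution (the analogue of the Benci--Fortunato bound $p>4$). To pass to the limit against the critical exponent $6$, I would invoke the principle of symmetric criticality and work on the radial subspace $\D_r$, using the Strauss-type decay $|u(x)|\le C|x|^{-1/2}\|\n u\|_2$. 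Combined once more with the double-power structure of $f$, this yields weak continuity of $u\mapsto\irt f(u)$ and $u\mapsto\irt f'(u)u$ on $\D_r$: the outer region is controlled by $q>6$, since there $|u_n|$ is uniformly small and $\iRc|u_n|^q\le(CR^{-1/2})^{q-6}\|u_n\|_6^6$; the set $\{|u_n|>M\}$ is controlled by $p<6$, since $\int_{\{|u_n|>M\}}|u_n|^p\le M^{p-6}\|u_n\|_6^6$; and on the remaining bounded, bounded-amplitude region the compact embedding $\D_r\hookrightarrow L^s_{loc}$ ($s<6$) applies. An analogous localization handles the interaction term. Testing $I'(u_n)[u_n-u]\to0$ and using these facts then forces $\|\n(u_n-u)\|_2\to0$.

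Finally, the strong limit $u$ is a critical point of $I$ with $I(u)=c>0$, hence $u\ne0$; and $\phi_u\ne0$ because $\phi_u\equiv0$ would force $e\o u^2\equiv0$. Therefore $(u,\phi_u)\in\D\times\D$ is the nontrivial weak solution of \eqref{KGM} claimed in Theorem \ref{main}. I expect the two genuinely hard points to be exactly those in which the limit case $\o=m$ departs from the classical situation: defining $\phi_u$ in the homogeneous space $\D$ without any $L^2$ control, and regaining compactness at the critical exponent $6$. Both are overcome by the radial framework together with the deliberately inhomogeneous growth of $f$ (supercritical at the origin, subcritical at infinity), exactly as in the zero-mass theory of \cite{BL1}.
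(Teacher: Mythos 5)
Your overall strategy --- a direct mountain--pass argument for the reduced functional on (the radial subspace of) $\D$ --- runs into an obstruction that the paper explicitly flags and that your proposal acknowledges but does not actually overcome: the reduction $u\mapsto\phi_u$ is simply not defined on $\D$. The Lax--Milgram construction of $\phi_u$ requires $v\mapsto\irt u^2v$ to be continuous on $\D$ (i.e.\ $u\in L^{12/5}(\RT)$) and the bilinear form $\irt \n\phi\cdot\n v+e^2u^2\phi v$ to be finite (which needs $u\in L^{3}(\RT)$), while $\D$ embeds only into $L^6(\RT)$. Restricting to radial functions does not help: the Strauss decay $|u(x)|\le C|x|^{-1/2}\|\n u\|_2$ is far from enough for $L^{3}$ or $L^{12/5}$ integrability at infinity. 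Your proposed fix --- working on $\{u\in\D: u^2\in L^{6/5}\}$ and ``recovering $(u,\phi_u)\in\D\times\D$ only at the end'' --- does not repair this: that set is not a closed subspace of $\D$, so the functional $I$ is not a $C^1$ functional on a Banach space, the mountain--pass theorem and the deformation lemma do not apply, and there is no reason a Palais--Smale sequence (even if one could define it) would stay in that set. This is precisely the sense of the paper's remark that ``there exists no functional $\mathcal S$ defined on $\D\times\D$ such that its critical points are weak solutions of \eqref{KGM}''; the geometry/compactness part of your argument therefore has nothing well-defined to act on.

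The paper sidesteps this entirely by approximation: for each $\eps>0$ it adds the mass term $\eps u$, obtains a radial mountain--pass solution $(u_\eps,\phi_{u_\eps})\in\H\times\D$ of \eqref{KGMeps} from Lemma \ref{main-1}, proves the levels $c_\eps$ are bounded above, uses ({\bf f2}) with $\a>4$ (essentially your computation of $I-\tfrac1\a I'[\,\cdot\,]$, which is correct) together with the Poisson identity to get a uniform $\D\times\D$ bound, and then passes to the weak limit, verifying the two equations only in the distributional sense against compactly supported test functions --- so the question of defining a functional on $\D\times\D$ never arises. Nontriviality of the limit comes from the uniform lower bound $\irt f'(u_\eps)u_\eps\ge C>0$ on the Nehari constraint plus the compactness of the radial embedding into the sum space $L^p+L^q$ ($p<6<q$), which is where the inhomogeneous growth of $f$ is really used. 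Several ingredients of your sketch (the sign and boundedness of $e\phi_u(2\o-e\phi_u)$, the role of $\a>4$, the $L^p+L^q$ compactness) do reappear in the paper's proof, but they are deployed on the $\eps$-problems, where the functional framework is legitimate; as written, your direct route does not close.
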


Under the assumptions of Theorem \ref{main}, standard arguments (again Lemma \ref{main-1}) yield the existence of $(u,\phi)\in \H\times \D$ weak solutions of \eqref{main eq} in the case $\o<m$.
The limit case $\o=m$ is trickier.

Even if the claim of Theorem \ref{main} is analogous to the quoted existence results (e.g. Theorem \ref{th}) and the meaning of weak solution is the same, the approach in the proof is completely different. More precisely in the zero mass case, there exists no functional ${\cal S}$ defined on $\D\times\D$ such that its critical points are weak solutions of \eqref{KGM}.

As above we could consider a functional ${\cal S}:\H\times\D \to \R$ whose critical points are finite energy weak solutions.
For every $u\in\H$ we can find $\phi_u\in\D$ solution of \eqref{seconda}, then we could consider the reduced functional $I(u)={\cal S}(u,\phi_u)$. The reduced functional $I$ has the form
\[
I(u)={\cal S}(u,\phi_u)=\frac12 \irt |\n u|^2  +e\o \phi_u u^2
-\irt f(u).
\]
For such a functional the mountain pass geometry in $\H$ is not immediately available.

The solution $(u_0,\phi_0)\in \D\times\D$ will be found as limit of solutions of approximating problems
\begin{equation} \label{KGMeps}
\left\{
\begin{array}{ll}
-\Delta u +(\eps + 2e\o\phi-e^2\phi^2 )u-f'(u)=0 & \hbox{in }\RT,
\\
\Delta \phi=e(e\phi-\o) u^2 & \hbox{in }\RT,
\end{array}
\right.
\end{equation}
For every $\eps>0$, Lemma \ref{main-1} yields a solution $(u_\eps,\phi_\eps)\in \H\times \D$.
The stronger assumptions on $f$ (subcritical at infinity, supercritical at zero) give rise to uniform estimate in $\D\times\D$ which allows to pass to the limit as $\eps\to 0$.

Before giving the proof of Theorems \ref{th} and \ref{main}, let us recall some other results concerning \eqref{main eq}. In \cite{C} there are existence and nonexistence results when $f$ has a critical growth at infinity. In \cite{AP} it is proved the existence of a ground state for \eqref{eq} (under the existence assumptions of \cite{DM}). Other recent papers (e.g. \cite{bf5} and \cite{Lo}) are concerned with the Klein-Gordon-Maxwell system with a completely different kind of nonlinearity, satisfying
\[
\frac{1}{2}m^2t^2-f(t)\geq 0.
\]
The solutions in this case are called ``nontopological solitons''.
In \cite{bf5} it is proved the existence of a nontrivial solution if the coupling constant $e$ is sufficiently small.
There are also some results for the system \eqref{main eq} in a bounded spatial domain \cite{dps} and \cite{dps1}. In this situation existence and nonexistence of nontrivial solutions depend on the boundary conditions, the boundary data, the kind of nonlinearity and the value of $e$. Lastly let us make mention of the review paper \cite{Fort}, which contains a large amount of references on this topic, including existence results for other classes of Klein-Gordon-Maxwell systems, obtained with more general \emph{ansatz}.

In the next Sections we shall prove respectively Theorems \ref{th} and \ref{main}. Appendix A contains the proof of a certain inequality, used in Section 2, which involves only elementary Calculus arguments.
\section{Proof of Theorem \ref{th}}

We need the following:
\begin{lemma}\label{le:phi}
For any $u\in \H$, there exists a unique $\phi=\phi_u\in \D$ which satisfies
\[
\Delta \phi=e(e\phi-\o) u^2 \quad \hbox{in }\RT.
\]
Moreover, the map $\Phi:u\in\H\mapsto\phi_u\in\D$ is continuously
differentiable, and on the set $\{x\in \RT\mid u(x)\neq 0\}$,
\begin{equation}\label{eq:phi}
0\le \phi_u\le\frac{\o}{e}.
\end{equation}
\end{lemma}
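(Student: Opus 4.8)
The plan is to treat the second equation, for fixed $u\in\H$, as a linear elliptic problem in $\phi$ and solve it by Lax--Milgram, then to deduce the regularity of $\Phi$ from the Implicit Function Theorem and the sign bounds from a truncation (energy) argument. First I would rewrite \eqref{seconda} in the equivalent form $-\Delta\phi+e^2u^2\phi=e\o u^2$ and introduce, for fixed $u\in\H$, the bilinear form
\[
a_u(\phi,\psi)=\irt \n\phi\cdot\n\psi+e^2u^2\phi\psi
\]
on $\D\times\D$ together with the linear functional $L_u(\psi)=e\o\irt u^2\psi$. Coercivity of $a_u$ is immediate from the gradient term, since $\irt|\n\phi|^2\le a_u(\phi,\phi)$. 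Boundedness of $a_u$ and of $L_u$ follows from the Sobolev embeddings $\H\hookrightarrow L^6(\RT)$ and $\D\hookrightarrow L^6(\RT)$ together with H\"older's inequality: one controls the cubic term by $\irt u^2\phi\psi\le\|u\|_3^2\|\phi\|_6\|\psi\|_6$ and the source term by $\irt u^2\psi\le\|u\|_{12/5}^2\|\psi\|_6$, both finite because $u\in\H$ lies in every $L^s(\RT)$ with $s\in[2,6]$. Lax--Milgram then yields the unique $\phi_u\in\D$ with $a_u(\phi_u,\psi)=L_u(\psi)$ for all $\psi\in\D$, i.e.\ the unique weak solution.

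Next I would prove that $\Phi\colon u\mapsto\phi_u$ is $C^1$ via the Implicit Function Theorem. Define $G\colon\H\times\D\to\D^{*}$ by $\langle G(u,\phi),\psi\rangle=a_u(\phi,\psi)-L_u(\psi)$; the same H\"older/Sobolev estimates show that $G$ is well defined and of class $C^1$, being built from continuous multilinear forms and the $C^1$ map $u\mapsto u^2$. By construction $G(u,\phi_u)=0$, and the partial differential $D_\phi G(u,\phi_u)\colon\eta\mapsto a_u(\eta,\cdot)$ is precisely the coercive Lax--Milgram operator, hence an isomorphism of $\D$ onto $\D^{*}$. The Implicit Function Theorem then gives that $u\mapsto\phi_u$ is continuously differentiable.

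Finally, for the bounds I would argue by truncation. For the lower bound, test the weak formulation with $\psi=\min(\phi_u,0)\in\D$: the gradient term becomes $\irt|\n\psi|^2$, the cubic term equals $e^2\irt u^2\psi^2\ge0$, and the right-hand side $e\o\irt u^2\psi\le0$, so that $\irt|\n\psi|^2+e^2\irt u^2\psi^2\le0$; both terms must vanish, $\psi$ is constant, and being in $\D$ it is $0$, giving $\phi_u\ge0$. For the upper bound, test with $\psi=(\phi_u-\o/e)^+$: since $\phi_u\in L^6(\RT)$ the superlevel set $\{\phi_u>\o/e\}$ has finite measure, whence $\psi\in L^6(\RT)$ with $\n\psi\in L^2(\RT)$ and thus $\psi\in\D$. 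On the support of $\psi$ one has $\o-e\phi_u=-e\psi$, so the identity reduces to $\irt|\n\psi|^2+e^2\irt u^2\psi^2=0$, forcing $\psi=0$ and hence $\phi_u\le\o/e$. This in fact yields the bound a.e.\ on all of $\RT$, in particular on $\{u\neq0\}$.

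The routine parts are the Sobolev/H\"older estimates. The points demanding genuine care, and hence the main obstacle, are verifying that the truncations $\min(\phi_u,0)$ and $(\phi_u-\o/e)^+$ really belong to $\D$ — which relies on the identification of $\D$ with the space of $L^6(\RT)$ functions having $L^2$ gradient — and the step concluding that a constant element of $\D$ must vanish. These functional-analytic features of the homogeneous space $\D$, rather than the Lax--Milgram or Implicit Function Theorem applications, are where the argument could break down if handled carelessly.
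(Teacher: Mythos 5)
Your proof is correct and is essentially the standard argument that the paper itself does not reproduce but delegates to \cite{BF2,DM2}: Lax--Milgram for existence and uniqueness, the Implicit Function Theorem for the $C^1$ dependence of $\phi_u$ on $u$, and truncation tests $\min(\phi_u,0)$ and $(\phi_u-\o/e)^+$ for the bounds, all of whose delicate points (membership of the truncations in $\D$ via the identification of $\D$ with $\{v\in L^6(\RT):\n v\in L^2\}$, and the vanishing of constants in $\D$) you handle correctly. You in fact obtain the slightly stronger conclusion that $0\le\phi_u\le\o/e$ almost everywhere on $\RT$, not merely on $\{u\neq 0\}$, which of course implies the stated estimate \eqref{eq:phi}.
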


\begin{proof}
The proof can be found in \cite{BF2,DM2}.
\end{proof}

\begin{theorem}\label{theo:red}
The pair $(u,\phi)\in \H\times \D$ is a solution of \eqref{eq} if and only if $u$ is a critical point of
\[
I(u)={\cal S}(u,\phi_u)=\frac12 \irt |\n u|^2 + (m^2-\o^2) u^2 +e\o \phi_u u^2
-\frac{1}{p}\irt |u|^{p},
\]
and $\phi=\phi_u$.
\end{theorem}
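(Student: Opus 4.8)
The plan is to exploit the natural variational structure: weak solutions of \eqref{eq} are precisely the critical points of $\S$ on $\H\times\D$, and the reduction amounts to restricting $\S$ to the graph of the map $\Phi:u\mapsto\phi_u$ furnished by Lemma \ref{le:phi}. First I would record that $\S$ is of class $C^1$ on $\H\times\D$ — here the Sobolev embedding $\H\hookrightarrow L^p(\RT)$ for $p\in(2,6)$ controls the term $\frac1p\irt|u|^p$ — and compute its two partial differentials. Setting $\partial_\phi\S(u,\phi)=0$ recovers \eqref{seconda}, while $\partial_u\S(u,\phi)=0$ recovers the first equation of \eqref{eq}; thus $(u,\phi)$ solves \eqref{eq} if and only if it is a critical point of $\S$.

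The key observation is the following. By Lemma \ref{le:phi}, $\phi_u=\Phi(u)$ is the unique zero of $\phi\mapsto\partial_\phi\S(u,\phi)$, so that $\partial_\phi\S(u,\Phi(u))=0$ for every $u\in\H$. Since $\Phi$ is continuously differentiable, the composition $I(u)=\S(u,\Phi(u))$ is differentiable and the chain rule gives
\[
I'(u)[v]=\partial_u\S(u,\phi_u)[v]+\partial_\phi\S(u,\phi_u)\bigl[\Phi'(u)[v]\bigr]=\partial_u\S(u,\phi_u)[v],
\]
the second summand vanishing identically. This identity is the heart of the argument and makes both implications immediate. If $(u,\phi)$ solves \eqref{eq}, then $\phi$ solves \eqref{seconda}, hence $\phi=\phi_u$ by the uniqueness in Lemma \ref{le:phi}, and the first equation reads $\partial_u\S(u,\phi_u)=0$, i.e. $I'(u)=0$. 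Conversely, if $I'(u)=0$ and $\phi=\phi_u$, then $\partial_u\S(u,\phi_u)=0$ yields the first equation while $\phi=\phi_u$ satisfies \eqref{seconda} by construction, so $(u,\phi)$ solves \eqref{eq}.

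Finally I would derive the explicit expression for $I$. Testing \eqref{seconda} against $\phi_u$ and integrating by parts yields the identity $\irt|\n\phi_u|^2=\irt(e\o\phi_u-e^2\phi_u^2)u^2$; inserting this into $\S(u,\phi_u)$ collapses the terms quadratic in $\phi_u$ and leaves exactly the stated reduced form, with the surviving electrostatic contribution $e\o\phi_u u^2$.

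I do not expect a genuine analytic obstacle here: the only substantive technical requirement is the $C^1$ regularity of $\Phi$, which is already granted by Lemma \ref{le:phi} and legitimizes the chain rule, while the whole content is the cancellation of the $\partial_\phi$-term along the graph of $\Phi$. The mild care needed is in checking the differentiability of $\S$ and the bookkeeping of signs when passing from $\S(u,\phi_u)$ to the reduced functional.
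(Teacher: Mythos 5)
Your proposal is correct and is precisely the standard reduction argument the paper has in mind: the paper states Theorem \ref{theo:red} without proof, calling it a ``well known reduction argument'' (going back to \cite{BF2,DM2}), and your chain-rule identity $I'(u)[v]=\partial_u\S(u,\phi_u)[v]$ together with the cancellation coming from $\irt|\n\phi_u|^2=\irt(e\o\phi_u-e^2\phi_u^2)u^2$ is exactly that argument, with the bookkeeping done correctly.
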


For the sake of simplicity we set $\O=m^2-\o^2>0$.

With our assumptions, it is a hard task to find bounded Palais-Smale sequences of functional $I$, therefore we use an indirect method developed by Struwe \cite{S} and Jeanjean \cite{JJ}.
We look for the critical points of the functional
$I_\l\in C^1(\Hr,\R)$
\begin{equation*}
I_\l(u)=\frac12 \irt |\n u|^2 +\O u^2 +e\o \phi_u u^2
-\frac{\l}{p}\irt |u|^{p},
\end{equation*}
for $\l$ close to 1, where
\[
\Hr:=\{u\in \H\mid u \hbox{ is radially symmetric}\,\}.
\]

Set $\d<1$ a positive number (which we will estimate later),
$J=[\d,1]$ and
    \begin{equation*}
        \Gamma:=\{\gamma\in C([0,1],\Hr)\mid \gamma(0)=0,
        I_\l(\gamma(1))<0,\;\forall \l\in J\}.
    \end{equation*}
Using a slightly modified version of \cite[Theorem 1.1]{JJ}, it can
be proved the following
    \begin{lemma}\label{le:J}
        If $\G\neq\emptyset$ and for every $\l\in J$
            \begin{equation}\label{eq:cl}
                c_\l:=\inf_{\gamma\in\Gamma}\max_{t\in[0,1]}
                I_\l(\gamma(t)) > 0,
            \end{equation}
        then for almost every $\l\in J$ there is a sequence
        $(v_n^\l)_n\subset \Hr$ such that
            \begin{itemize}
                \item[(i)] $(v_n^\l)_n$ is bounded;
                \item[(ii)] $I_\l(v_n^\l)\to c_\l$;
                \item[(iii)] $I_\l'(v_n^\l)\to 0.$
            \end{itemize}
    \end{lemma}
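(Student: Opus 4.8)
The plan is to follow the monotonicity trick of Struwe and Jeanjean \cite{JJ}, whose decisive structural feature is the splitting of $I_\l$ into a $\l$-independent part and a part that is monotone in $\l$. I would write $I_\l(u)=A(u)-\l B(u)$, where
\[
A(u)=\frac12\irt |\n u|^2 +\O u^2 +e\o \phi_u u^2, \qquad B(u)=\frac1p\irt |u|^p.
\]
Since $B(u)\ge 0$ and, by Lemma~\ref{le:phi}, $\phi_u\ge 0$ on $\{u\neq 0\}$, one has $A(u)\ge \frac12\min(1,\O)\|u\|^2$, so that $A$ is coercive and controls the $\Hr$-norm. The key remark is that for each fixed $u$ the map $\l\mapsto I_\l(u)$ is non-increasing.

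From this I would deduce that $\l\mapsto c_\l$ is non-increasing on $J$: for a fixed path $\g$ the map $\l\mapsto\max_t I_\l(\g(t))$ is a maximum of non-increasing functions, hence non-increasing, and $c_\l$ is an infimum of such functions. As a monotone real function is differentiable almost everywhere, I would fix $\l\in J$ at which $c_\l'$ exists and is finite, and construct the sequence for this $\l$. Note that the minimax geometry holds uniformly on $J$: since $\g(0)=0$ gives $I_\l(0)=0$ and $I_\l(\g(1))<0$, the hypothesis $c_\l>0$ yields $c_\l>\max\{I_\l(\g(0)),I_\l(\g(1))\}$ for every admissible $\g$ and every $\l\in J$.

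Next I would extract a bounded almost-maximizing sequence. Choosing $\l_n\uparrow\l$ and nearly optimal paths $\g_n\in\G$ with $\max_t I_{\l_n}(\g_n(t))\le c_{\l_n}+(\l-\l_n)$, I set $v_n=\g_n(t_n)$, where $t_n$ realizes $\max_t I_\l(\g_n(t))$. Using $I_\l-I_{\l_n}=-(\l-\l_n)B$ together with $I_\l(v_n)\ge c_\l$ (because $\g_n\in\G$), a one-line computation gives
\[
B(v_n)\le \frac{c_{\l_n}-c_\l}{\l-\l_n}+1\longrightarrow -c_\l'+1,
\]
so $B(v_n)$ is bounded; then $A(v_n)=I_\l(v_n)+\l B(v_n)$ is bounded because $c_\l\le I_\l(v_n)\le c_{\l_n}+(\l-\l_n)$, whence $\|v_n\|$ is bounded by the coercivity of $A$. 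Squeezing the same inequality also yields $I_\l(v_n)\to c_\l$, giving (i) and (ii).

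The hard part is (iii), i.e.\ upgrading these bounded almost-maximizers to an almost-critical sequence, and this is exactly what is imported from \cite{JJ}. The argument is by contradiction via the quantitative deformation lemma: were $\|I_\l'\|$ bounded away from zero on a bounded neighborhood of the approximate critical set, one could deform the nearly optimal paths $\g_n$ inside the bounded region produced above so as to strictly lower their maximal $I_\l$-energy below $c_\l$, contradicting the definition of $c_\l$. The only modification with respect to the statement in \cite{JJ} is that the endpoints are not fixed but merely satisfy $\g(0)=0$ and $I_\l(\g(1))<0$ for all $\l\in J$; since these endpoints sit at energy $\le 0<c_\l$, the deformation can be arranged to leave them untouched and to preserve membership in $\G$, so the minimax class is stable and Jeanjean's scheme applies. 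I expect this deformation step---confining the construction to the bounded set while respecting the relaxed endpoint condition---to be the main technical obstacle.
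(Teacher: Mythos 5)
Your proposal is correct and follows exactly the route the paper intends: the paper offers no written proof of this lemma, simply invoking a ``slightly modified version'' of Jeanjean's Theorem 1.1 \cite{JJ}, and your argument --- the splitting $I_\l=A-\l B$ with $B\ge 0$ and $A$ coercive (using $\phi_u\ge 0$), the monotonicity and a.e.\ differentiability of $\l\mapsto c_\l$, the bounded almost-maximizing points along nearly optimal paths, and the deformation step noting that the endpoints sit at energy $\le 0<c_\l$ so the class $\G$ is preserved --- is precisely the content of that cited result adapted to this setting.
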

In order to apply Theorem \ref{le:J}, we have just to verify that
$\Gamma\neq\emptyset$ and \eqref{eq:cl}.

\begin{lemma}\label{le:Gamma}
For any $\l\in J$, we have that $\Gamma\neq\emptyset$.
\end{lemma}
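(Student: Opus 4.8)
The plan is to exhibit a single path, valid simultaneously for every $\l\in J$. Since the only $\l$-dependent term of $I_\l$ is the negative nonlinearity $-\frac{\l}{p}\irt|u|^p$, the map $\l\mapsto I_\l(u)$ is nonincreasing for each fixed $u$; hence $I_\l(u)\le I_\d(u)$ for all $\l\in J$, and it suffices to make the endpoint negative for the worst value $\l=\d$. I would therefore fix once and for all a nonzero radial function $\bar{u}\in\Hr$ (say a smooth radial bump), consider the ray $t\mapsto t\bar{u}$, and look for a single $t_0>0$ with $I_\d(t_0\bar{u})<0$. The path $\gamma(s):=s\,t_0\bar{u}$, $s\in[0,1]$, is then continuous in $\Hr$ (being linear in $s$, with $I_\l$ continuous thanks to the $C^1$ regularity of $\Phi$ from Lemma~\ref{le:phi}), satisfies $\gamma(0)=0$, and has $I_\l(\gamma(1))<0$ for every $\l\in J$, so that $\gamma\in\G$.

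The key step is to estimate $I_\d(t\bar{u})$ as $t\to\infty$, and the only delicate term is the nonlocal coupling $e\o\,\irt\phi_{t\bar{u}}\,\bar{u}^2$. Here Lemma~\ref{le:phi} is decisive: on $\{\bar{u}\ne 0\}$ one has $0\le\phi_{t\bar{u}}\le\o/e$, so
\[
\tfrac12\, e\o\,t^2\irt \phi_{t\bar{u}}\,\bar{u}^2 \;\le\; \tfrac12\,\o^2\,t^2\irt \bar{u}^2,
\]
and, recalling $\O=m^2-\o^2$, the whole quadratic part is bounded above by $\tfrac12 t^2\big(\irt|\n\bar{u}|^2+m^2\irt \bar{u}^2\big)$. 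Consequently
\[
I_\d(t\bar{u})\;\le\;\frac{t^2}{2}\Big(\irt|\n\bar{u}|^2+m^2\irt \bar{u}^2\Big)-\frac{\d}{p}\,t^p\irt|\bar{u}|^p.
\]
Since $p>2$ and $\bar{u}\ne 0$, the right-hand side tends to $-\infty$ as $t\to+\infty$, so a suitable $t_0$ exists.

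The computation itself is routine; the one point requiring care — and the reason the statement holds at all — is the uniform control of the coupling term, which is exactly what the a priori bound $0\le\phi_u\le\o/e$ of Lemma~\ref{le:phi} provides. I do not expect any genuine obstacle beyond making sure that (i) the sign and upper bound for $\phi_{t\bar{u}}$ are applied on the support of $\bar{u}$, where alone $\bar{u}^2$ is nonzero, and (ii) the choice of $t_0$ is made at $\l=\d$, the smallest coefficient, so that monotonicity in $\l$ upgrades the single estimate to all of $J$ at once.
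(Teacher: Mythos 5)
Your proposal is correct and follows essentially the same route as the paper: take a ray $t\mapsto t\bar u$ through a fixed nonzero radial function, use the pointwise bound $0\le\phi_{t\bar u}\le\o/e$ from Lemma~\ref{le:phi} to dominate the coupling term by $\tfrac12\o^2 t^2\irt\bar u^2$, and use $\l\ge\d$ to replace $\l$ by $\d$ in the negative term, so that $I_\l(t\bar u)\to-\infty$ as $t\to\infty$ uniformly in $\l\in J$ because $p>2$. The only cosmetic difference is that you make the monotonicity in $\l$ and the choice of the worst case $\l=\d$ explicit, whereas the paper absorbs this directly into its one-line estimate.
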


\begin{proof}
Let $u\in \Hr\setminus\{0\}$ and let $\t>0$.
Define $\g:[0,1]\to\Hr$ such that $\g(t)=t \t u$, for all $t\in [0,1]$.
By \eqref{eq:phi}, for any $\l\in J$, we have that
    \begin{equation*}
        I_\l(\g(1)) =I_\l (\t u)\le \frac{\t^2}{2} \irt |\n u|^2 +\O u^2 +\o^2 u^2
- \d\frac{\t^{p}}{p}\irt |u|^{p},
    \end{equation*}
and then certainly $\g\in\G$ for a suitable choice of $\t$.
\end{proof}

\begin{lemma}\label{le:cl}
For any $\l\in J$, we have that $c_\l>0$.
\end{lemma}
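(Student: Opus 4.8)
The plan is to verify that $I_\l$ has a genuine mountain pass geometry, uniformly in $\l\in J$: I would show that the origin is a strict local minimum whose small spheres lie at a positive, $\l$-independent height, and then observe that every admissible path must cross one such sphere.

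First I would discard the interaction term. By Lemma~\ref{le:phi} one has $0\le\phi_u\le\o/e$ on $\{u\neq0\}$, hence $e\o\phi_u u^2\ge0$ and
\[
I_\l(u)\ge \frac12\irt |\n u|^2+\O u^2-\frac{\l}{p}\irt|u|^p .
\]
Since $\O=m^2-\o^2>0$, the quadratic form $\irt|\n u|^2+\O u^2$ is an equivalent norm on $\H$, which I denote $\|u\|^2$. The hypothesis $\o<m$ is exactly what is needed here to make the quadratic part coercive.

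Next I would use the Sobolev embedding $\H\hookrightarrow L^p(\RT)$, continuous because $2<p<4<6=2^*$, to get $\irt|u|^p\le C\|u\|^p$. Bounding $\l\le1$ then gives
\[
I_\l(u)\ge \frac12\|u\|^2-\frac{C}{p}\|u\|^p=\|u\|^2\Big(\frac12-\frac{C}{p}\|u\|^{p-2}\Big),
\]
a lower bound free of $\l$. As $p>2$, choosing $\rho>0$ so small that $\frac{C}{p}\rho^{p-2}<\frac12$ produces constants $\rho>0$ and $\a_0:=\rho^2(\frac12-\frac{C}{p}\rho^{p-2})>0$, both independent of $\l\in J$, with $I_\l(u)\ge0$ whenever $\|u\|\le\rho$ and $I_\l(u)\ge\a_0$ whenever $\|u\|=\rho$.

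Finally I would close with a crossing argument. Fix $\l\in J$ and $\gamma\in\Gamma$. Since $I_\l(\gamma(1))<0$ while $I_\l\ge0$ on the ball $\{\|u\|\le\rho\}$, necessarily $\|\gamma(1)\|>\rho$; as $t\mapsto\|\gamma(t)\|$ is continuous and vanishes at $t=0$, there is $t_0$ with $\|\gamma(t_0)\|=\rho$, so $\max_{t\in[0,1]}I_\l(\gamma(t))\ge I_\l(\gamma(t_0))\ge\a_0$. Passing to the infimum over $\gamma$ yields $c_\l\ge\a_0>0$. The argument is otherwise routine; the only delicate point is the uniformity in $\l$, which is secured precisely by dropping the nonnegative term $e\o\phi_u u^2$ and by bounding $\l\le1$, so that $\rho$ and $\a_0$ can be chosen once and for all on $J=[\d,1]$.
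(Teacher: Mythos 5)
Your proof is correct and follows essentially the same route as the paper's: drop the nonnegative term $e\o\phi_u u^2$ via the bound $0\le\phi_u\le\o/e$, bound $\l\le 1$, use the Sobolev embedding to show $I_\l$ is uniformly positive on a small sphere, and conclude with the crossing argument for paths in $\Gamma$. No issues.
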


\begin{proof}
Observe that for any $u\in\Hr$ and $\l\in J$, by \eqref{eq:phi}, we have
\[
I_\l(u) \ge \frac 1 2 \irt |\n u|^2+\O u^2 -\frac{1}{p}\irt |u|^{p},
\]
and then, by Sobolev embeddings, we conclude that there exists
$\rho
>0$ such that for any $\l\in J$ and $u\in\Hr$ with $u\neq 0$ and
$\|u\|\le\rho,$ it results $I_\l(u)>0.$ In particular, for any
$\|u\|=\rho,$ we have $I_\l(u) \ge \tilde c >0.$ Now fix $\l\in J$
and $\g\in\G.$ Since $\g(0)=0\neq\g(1)$ and $I_\l(\g(1))\le 0$,
certainly $\|\g(1)\|
> \rho.$ By continuity, we deduce that there exists $t_\g\in
(0,1)$ such that $\|\g(t_\g)\|=\rho.$ Therefore, for any $\l\in
J,$
    \begin{equation*}%\label{eq:cll}
        c_\l\ge \inf_{\g\in\G} I_\l(\g(t_\g)) \ge \tilde c >0.
    \end{equation*}
\end{proof}

\begin{proofth}
Let $\l\in J$ for which there exists a bounded Palais-Smale sequence
$(v_\l^n)_n$ in $\Hr$ for functional $I_\l$ at level $c_\l$, namely
    \begin{align*}
        I_\l(v_\l^n)&\to c_\l;%\label{eq:conv}
        \\
        I_\l'(v_\l^n)&\to 0\;\hbox{in } (\Hr)'.%\label{eq:p-s}
    \end{align*}

Up to a subsequence, we can suppose that there exists $v_\l\in\Hr$
such that
    \begin{equation}\label{eq:weak}
        v_\l^n\rightharpoonup v_\l\;\hbox{weakly in }\Hr
    \end{equation}
and
    \begin{equation*}%\label{eq:aeconv}
        v_\l^n(x)\to v_\l(x)\;\hbox{a.e. in }\RN.
    \end{equation*}
We make the following claims:
    \begin{align}
        I_\l'(v_\l)&=0\label{eq:claim1},
        \\
        v_\l&\neq 0\nonumber%\label{eq:neq},
        \\
        I_\l(v_\l)& \le c_\l.\label{eq:claim2}
    \end{align}

Claim \eqref{eq:claim1} follows immediately by \cite[Lemma 2.7]{AP}.

Suppose by contradiction that $v_\l=0$, then, since $v_n^\l\to v_\l(\equiv 0)$ in $L^{p}(\RT)$ and $I'_\l (v_n^\l)[v_n^\l]=o_n(1)\|v_n^\l\|$, we have
\begin{align*}
\irt |\n v_n^\l|^2+\O (v_n^\l)^2
&\le \irt |\n v_n^\l|^2 +\O (v_n^\l)^2 +2e\o \phi_{v_n^\l} (v_n^\l)^2 -e^2\phi_{v_n^\l}^2 (v_n^\l)^2
\\
&=\l\irt |v_n^\l|^{p}+o_n(1)\|v_n^\l\|=o_n(1).
\end{align*}
Hence $v_n^\l\to 0$ in $\H$ and we get a contradiction with \eqref{eq:cl}.
\medskip

We pass to prove \eqref{eq:claim2}. Since $v_n^\l\to v_\l$ in $L^{p}(\RT)$, by \eqref{eq:weak},
by the weak lower semicontinuity of the $\H-$norm and by Fatou Lemma, we get $I_\l(v_\l)\le c_\l.$

Now we are allowed to consider a suitable $\l_n\nearrow 1$ such
that, for any $n\ge 1$, there exists $v_n\in\Hr\setminus\{0\}$
satisfying
    \begin{align}
        (I_{\l_n})'(v_n)&=0\;\hbox{in }(\Hr)'\label{eq:sol},\\
        I_{\l_n}(v_n)&\le c_{\l_n}.\label{eq:mp}
    \end{align}
We want to prove that such a sequence is bounded.
\\

By \cite{DM2}, $v_n$ satisfies the Pohozaev equality
\begin{equation}\label{eq:Poho}
\irt |\n v_n|^2
+  3\O v_n^2
+  5e\o\phi_{v_n}v_n^2
- 2e^2\phi_{v_n}^2v_n^2
-\frac{6\l_n}{p}\irt |v_n|^{p}=0.
\end{equation}
Therefore, by \eqref{eq:sol}, \eqref{eq:mp} and \eqref{eq:Poho} we
have that the following system holds
\begin{equation*}%\label{eq:sys1}
\left\{
\begin{array}{l}
\irt \frac 12|\n v_n|^2
+  \frac 12 \O v_n^2
+  \frac{e\o}{2}\phi_{v_n}v_n^2
-\frac{\l_n}{p} |v_n|^{p} \le c_{\l_n},
\\
\irt |\n v_n|^2
+  3\O v_n^2
+  5e\o\phi_{v_n}v_n^2
- 2e^2\phi_{v_n}^2v_n^2
-\frac{6\l_n}{p} |v_n|^{p}=0,
\\
\irt |\n v_n|^2
+  \O v_n^2
+ 2 e\o\phi_{v_n}v_n^2
- e^2\phi_{v_n}^2v_n^2
-\l_n |v_n|^{p}=0.
\end{array}
\right.
\end{equation*}

Subtracting by the first the second multiplied by $\a$ and the third multiplied by $(1-6\a)/p$, we get
\[
\frac{p-2\alpha p-2+12\alpha}{2p}\irt |\n v_n|^2
+\irt \left[C_{p,\a}\O
+B_{p,\a}e\o\phi_{v_n}
+A_{p,\a}e^2 \phi_{v_n}^2 \right]v_n^2
\le c_{\l_n},
\]
where
\begin{align*}
C_{p,\alpha}  & =\frac{\left(  p-2\right)  (1-6\alpha)}{2p},\\
B_{p,\alpha}  & =\frac{p-10\alpha p-4+24\alpha}{2p},\\
A_{p,\alpha}  & =\frac{1+2\alpha\left(  p-3\right)  }{p}.
\end{align*}
\\
It is easy to see that
\[
\frac{p-2\alpha p-2+12\alpha}{2p}>0,
\]
if and only if
\begin{equation*}%\label{eq:a>}
\a>\frac{2-p}{2(6-p)}.
\end{equation*}
In the Appendix (see Lemma \ref{le:para}) we will prove that there exists $\a\in\left(\frac{2-p}{2(6-p)},\frac 16\right)$ such that
\begin{equation*}%\label{eq:para}
C_{p,\a}\O
+B_{p,\a}e\o\phi_{v_n}
+A_{p,\a}e^2 \phi_{v_n}^2 \ge 0,
\end{equation*}
then we can argue that
\begin{equation}\label{eq:bound}
\|\n v_n\|_2\le C \hbox{ for all }n\ge 1.
\end{equation}
Moreover, by \eqref{eq:sol}, we have
\begin{equation}\label{eq:ne}
\O \irt v_n^2
\\
\le \irt |\n v_n|^2 +\O v_n^2 +2e\o \phi_{v_n} v_n^2 -e^2\phi_{v_n}^2 v_n^2=\l_n\irt |v_n|^{p}.
\end{equation}
Since for all $\eps>0$ there exists $C_\eps>0$ such that $t^p\le C_\eps t^6 +\eps t^2$, for all $t\ge 0$, taking $\eps=\O/2$, by \eqref{eq:ne} we get
\begin{equation*}
\frac{\O}{2}\irt v_n^2 \le C_\eps \irt v_n^{6}.
\end{equation*}
Therefore, by the Sobolev embedding $\D\hookrightarrow
L^{6}(\RT)$ and \eqref{eq:bound} we deduce that
$(v_n)_n$ is bounded in $\H$.

Up to a subsequence, there exists $v_0\in\Hr$ such that
    \begin{equation*}%\label{eq:weak2}
        v_n\rightharpoonup v_0\;\hbox{weakly in }\Hr.
    \end{equation*}
By \eqref{eq:sol}, we have that
\begin{equation*}
I'(v_n)=(I_{\l_n})'(v_n) + (\l_n -1) |v_n|^{p-2}v_n= (\l_n-1)|v_n|^{p-2}v_n
\end{equation*}
so $(v_n)_n$ is a Palais-Smale sequence for the functional
$I|_{H^1_r},$ since the sequence $(|v_n|^{p-2}v_n)_n$ is bounded in
$\big(\Hr\big)'.$
\\
By \cite[Lemma 2.7]{AP}, we have that $I'(v_0)=0.$

To conclude the proof, it remains to check that $v_0\neq 0.$
\\
By \eqref{eq:sol}, we have
\[
\irt |\n v_n|^2 +\O v_n^2
\le
\irt |\n v_n|^2 +\O v_n^2 +2e\o \phi_{v_n} v_n^2 -e^2\phi_{v_n}^2 v_n^2
\le\irt |v_n|^{p}
\]
and then, there exists $C>0$ such that $\|v_n\|_p\ge C$. Since $v_n \to v_0$ in $L^p(\RT)$, we conclude.
\end{proofth}

\section{Proof of Theorem \ref{main}}

The following lemma generalizes the existence result of \cite{BF2}.
\begin{lemma}\label{main-1}
        Let $f$ satisfy the following hypotheses:
        \begin{itemize}
\item[({\bf f1})] $f\in C^1(\R,\R)$;
\item[({\bf f2})] $\exists \a>4$ such that $\forall t \in\R\setminus\{0\}:\;\a f(t)\le f'(t)t$;
\item[({\bf f5})] $f'(t)=o(|t|)$ as $t \to 0$;
\item[({\bf f6})] $\exists C_1,C_2\ge 0$ and $p<6$ such that $\forall t\in\R:$ $|f'(t)|\le C_1 + C_2 |t|^{p-1}$.
\end{itemize}
        Assume that $0< \o < m$. Then \eqref{main eq} admits a nontrivial weak solution $(u,\phi)\in \H\times \D$.
    \end{lemma}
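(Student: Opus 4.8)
The plan is to carry over the mountain‑pass scheme of \cite{BF2} to the general nonlinearity $f$. Since $\o<m$, the constant $\O:=m^2-\o^2$ is positive, so the reduced functional
\[
I(u)=\frac12\irt\big(|\n u|^2+\O u^2+e\o\phi_u u^2\big)-\irt f(u),
\]
which is the exact analogue of Theorem \ref{theo:red} with $|u|^p/p$ replaced by $f(u)$, is well defined and, by Lemma \ref{le:phi} together with (f1) and (f6), of class $C^1$ on $\H$; its critical points are precisely the first components of the weak solutions of \eqref{main eq}, with $\phi=\phi_u$. As every term is invariant under rotations, I would work on the radial subspace $\Hr$, where the embeddings $\Hr\hookrightarrow L^s(\RT)$ are compact for $2<s<6$, and recover full criticality afterwards by the principle of symmetric criticality (as encoded in \cite[Lemma 2.7]{AP}).

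Next I would verify the mountain‑pass geometry. By \eqref{eq:phi} one has $0\le e\o\phi_u u^2\le\o^2 u^2$, so the coupling term is nonnegative (hence harmless from below) while being controlled from above by $\o^2\|u\|_2^2$. Near the origin (f5) gives $f(t)=o(t^2)$, which combined with the subcritical bound (f6) and Sobolev's inequality yields $\irt f(u)\le\eps\|u\|^2+C_\eps\|u\|^p$ with $2<p<6$; since $\O>0$ the quadratic part dominates and $I(u)\ge\beta>0$ on a small sphere $\|u\|=\rho$. For the descent, integrating (f2) produces the superquadratic behaviour $f(t)\gtrsim|t|^\alpha$ at infinity with $\alpha>4$; fixing $u\in\Hr\setminus\{0\}$ and using $e\o\phi_{tu}(tu)^2\le\o^2 t^2 u^2$ one gets $I(tu)\le C t^2-c\,t^{\alpha}\irt|u|^\alpha\to-\infty$. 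The Mountain Pass Theorem then furnishes a Palais--Smale sequence $(u_n)\subset\Hr$ at a level $c>0$.

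The heart of the argument, and the step I expect to be the main obstacle, is the boundedness of $(u_n)$; here the strengthened exponent $\alpha>4$ (in place of the usual $\alpha>2$) is indispensable and reflects the genuinely different nature of the Klein--Gordon--Maxwell coupling. Using the reduction identity $I'(u)[u]=\irt|\n u|^2+\big(\O+2e\o\phi_u-e^2\phi_u^2\big)u^2-f'(u)u$, I would compute
\begin{align*}
I(u_n)-\tfrac1\alpha I'(u_n)[u_n]
&=\Big(\tfrac12-\tfrac1\alpha\Big)\irt\big(|\n u_n|^2+\O u_n^2\big)\\
&\quad+\irt\Big[\Big(\tfrac12-\tfrac2\alpha\Big)e\o\phi_{u_n}+\tfrac1\alpha e^2\phi_{u_n}^2\Big]u_n^2-\irt\Big(f(u_n)-\tfrac1\alpha f'(u_n)u_n\Big).
\end{align*}
By (f2) the last integral is nonnegative, and since $\phi_{u_n}\ge0$ and $\tfrac12-\tfrac2\alpha>0$ exactly because $\alpha>4$, the bracketed coupling integrand is nonnegative as well. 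Hence the right‑hand side is bounded below by $\big(\tfrac12-\tfrac1\alpha\big)\min(1,\O)\|u_n\|^2$, whereas the left‑hand side is $\le c+o(1)+o(1)\|u_n\|$; this forces $(\|u_n\|)_n$ to be bounded.

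Finally I would pass to the limit. Up to a subsequence $u_n\weakto u_0$ in $\Hr$, with strong convergence in $L^s(\RT)$ for $2<s<6$ and a.e., by compactness of the radial embedding. Interpolating (f5) and (f6) into $|f'(t)|\le\eps|t|+C_\eps|t|^{p-1}$ lets me pass to the limit in $\irt f'(u_n)v$, while the uniform bound $0\le\phi_{u_n}\le\o/e$ and the continuity of the map $\Phi$ of Lemma \ref{le:phi} control the nonlocal term; this yields $I'(u_0)=0$, as in \cite[Lemma 2.7]{AP}. Testing $I'(u_n)-I'(u_0)$ against $u_n-u_0$, the nonlinear and nonlocal contributions vanish by the preceding estimates and one is left with $\|u_n-u_0\|^2\to0$, i.e.\ strong convergence. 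Therefore $I(u_0)=c>0=I(0)$, so $u_0\neq0$, and $(u_0,\phi_{u_0})\in\H\times\D$ is the desired nontrivial weak solution of \eqref{main eq}.
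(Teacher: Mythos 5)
Your proposal is correct and follows exactly the route the paper intends: the paper's own ``proof'' is only a three-bullet outline (reduction to the functional $I$ via Lemma \ref{le:phi} and Theorem \ref{theo:red}, Palais--Smale condition in $\Hr$, mountain-pass geometry), and you have filled in precisely those steps. In particular, your computation of $I(u_n)-\tfrac{1}{\alpha}I'(u_n)[u_n]$, which isolates the coefficient $\tfrac12-\tfrac{2}{\alpha}$ in front of the nonnegative coupling term $e\omega\phi_{u_n}u_n^2$, is exactly the reason why ({\bf f2}) is needed with $\alpha>4$ rather than the usual $\alpha>2$.
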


We simply give an outline of the proof.

\begin{itemize}
\item Using the same reduction argument (Lemma \ref{le:phi} and Theorem  \ref{theo:red}) applied to \eqref{main eq},
it is immediately seen that that $(u,\phi)\in H^1(\RT)\times \D$ is a
solution of \eqref{main eq} if and only if $u\in\H$ is a critical point
of
\begin{equation*}%\label{eq:Ieps}
I(u)= \frac 12 \irt |\n u|^2 + (m^2-\o^2) u^2 + e\o \phi_u u^2 -\irt f(u),
\end{equation*}
and $\phi=\phi_{u}$.

\item The functional $I$ satisfies the Palais-Smale condition in $\Hr$.

\item The functional $I$ shows the Mountain Pass geometry.

\end{itemize}

\begin{remark}
If $f$ is odd, just like in \cite{BF2}, the $\Z_{2}$-Mountain Pass Theorem \cite{AR} yields infinitely many solutions.
\end{remark}

Now we can prove Theorem \ref{main}.

As we said in the Introduction, for every $\eps>0$, we consider the approximating problem \eqref{KGMeps}.
The above Lemma gives the solution $(u_\eps,\phi_\eps)\in H^1(\RT)\times \D$.
More precisely  they are mountain pass type solutions and they are radially symmetric, in the sense that $u_\eps\in\Hr$ is a critical point of
\begin{equation*}%\label{eq:Ieps}
I_\eps(u)= \frac 12 \irt |\n u|^2 + \eps u^2 + e\o \phi_u u^2 -\irt f(u),
\end{equation*}
at the level
    \begin{equation*}
        c_\eps=\inf_{g\in \G_\eps} \max_{\t\in [0,1]} I_\eps(g(\t)),
    \end{equation*}
where
\begin{equation*}%\label{eq:gamma}
\G_\eps=\left\{g\in C\big([0,1],\H\big) \mid g(0)=0,\;I_\eps(g(1))\le
0, \;g(1)\neq 0\right\}.
\end{equation*}
Moreover, $u_\eps$ belongs to the Nehari manifold of $I_\eps$:
\[
\Ne_\eps=\left\{u\in\H\setminus\{0\}\mid
\irt |\n u|^2+\eps u^2 +2e\o\phi_u u^2
-e^2\phi_u^2 u^2=\irt f'(u)u\right\}.
\]
In the sequel, we will refer to those approximating solutions as
{\it $\eps-$solutions}.

\begin{lemma}\label{le:PM}
There exists $C>0$ such that $c_\eps<C$, for any $0<\eps<1$.
\end{lemma}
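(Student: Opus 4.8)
The goal is to show the mountain‑pass levels $c_\eps$ of the approximating functionals $I_\eps$ are bounded above uniformly in $\eps\in(0,1)$. The natural plan is to produce a \emph{single} path $g\in C([0,1],\Hr)$ that is admissible for \emph{every} $I_\eps$ simultaneously (i.e.\ $g\in\G_\eps$ for all $\eps$) and along which $\max_\t I_\eps(g(\t))$ stays below a constant independent of $\eps$.

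\medskip

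\noindent\textbf{Construction of the path.} First I would fix once and for all a function $u\in\Hr\setminus\{0\}$ (so that all the integrals below are finite) and consider the ray $g(\t)=\t\,\bar\t\,u$ for $\t\in[0,1]$, where the scaling factor $\bar\t>0$ will be chosen large at the end. To estimate $I_\eps$ along this ray I would use Lemma~\ref{le:phi}, specifically the pointwise bound $0\le\phi_u\le\o/e$ from \eqref{eq:phi}, which gives
\[
e\o\,\phi_{sv}\,(sv)^2\le \o^2 (sv)^2
\]
for every $s>0$; this is exactly the device already used in Lemma~\ref{le:Gamma}. Hence for any $s\ge 0$,
\[
I_\eps(s u)\le \frac{s^2}{2}\irt |\n u|^2+(\eps+\o^2)u^2
-\irt f(su).
\]
Because $0<\eps<1$, I can replace $\eps$ by $1$ in the bracket, obtaining an upper bound that no longer depends on $\eps$:
\[
I_\eps(s u)\le \frac{s^2}{2}\irt |\n u|^2+(1+\o^2)u^2
-\irt f(su)=:h(s).
\]

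\medskip

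\noindent\textbf{Why $h$ is bounded above and the path is admissible.} The key input now is hypothesis ({\bf f3}), which forces $f$ to grow at least like $\min(|t|^p,|t|^q)$ with $p>4$. On a fixed nonempty set where $|u|$ is bounded away from $0$, the term $\irt f(su)$ grows at least like $s^p$ (up to a positive constant), while the quadratic part grows only like $s^2$. Since $p>4>2$, this shows $h(s)\to-\infty$ as $s\to+\infty$; in particular $h$ attains a finite maximum over $[0,\infty)$, say at some $s^\ast$, and $\max_{s\ge 0}h(s)=:C$ is a finite constant depending only on $u,\o$ (not on $\eps$). Choosing $\bar\t$ so large that $h(\bar\t)\le 0$ makes $g(1)=\bar\t u$ satisfy $I_\eps(g(1))\le 0$ for \emph{every} $\eps\in(0,1)$, so $g\in\G_\eps$ for all such $\eps$, and then
\[
c_\eps\le\max_{\t\in[0,1]}I_\eps(g(\t))\le\max_{s\ge 0}h(s)=C.
\]

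\medskip

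\noindent\textbf{Main obstacle.} The only delicate point is obtaining the \emph{$\eps$‑independent} upper bound on the maximum along the path: one must be sure that absorbing $\eps$ into the constant $1$ (legitimate since $\eps<1$) and controlling the interaction term via $0\le\phi_u\le\o/e$ both produce a majorant $h$ whose maximizer $s^\ast$ does not drift as $\eps\to 0$. This is handled cleanly precisely because $h$ has been made independent of $\eps$ before maximizing, so the superquadratic growth from ({\bf f3}) guarantees a uniform finite maximum. The remaining verifications (finiteness of all integrals for the chosen $u$, continuity of $\t\mapsto I_\eps(g(\t))$, and $g(1)\ne 0$) are routine.
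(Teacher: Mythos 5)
Your proof is correct and follows essentially the same route as the paper: the paper simply observes that $I_\eps\le I_1$ for $\eps<1$ and evaluates along a fixed admissible path, which yields an $\eps$-independent bound on $c_\eps$. You make this explicit by constructing the ray $g(\t)=\t\bar\t u$, majorizing $I_\eps$ by an $\eps$-free function $h$ via \eqref{eq:phi} and $\eps<1$, and using the superquadratic growth from ({\bf f3}) to ensure admissibility and a finite uniform maximum --- all consistent with the paper's (much terser) argument.
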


\begin{proof}
Let $0<\eps<1$ and $g\in\G_\eps$, we have
\[
c_\eps\le \max_{\t\in [0,1]} I_\eps(g(\t))=I_\eps(g(\t_0))\le I_1(g(\t_0)).
\]
\end{proof}

\begin{lemma}\label{le:>C}
There exists $C>0$ such that $\|u_\eps\|_{{\cal D}^{1,2}}\ge C$, for any $\eps>0$. Moreover, for any $\eps>0$,
\begin{equation}\label{eq:f'}
\irt f'(u_\eps)u_\eps \ge C.
\end{equation}
\end{lemma}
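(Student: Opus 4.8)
The plan is to exploit the fact that $u_\eps$ lies on the Nehari manifold $\Ne_\eps$ together with the two-sided control of the nonlinearity encoded in (f4), and to let the critical Sobolev embedding $\D\hookrightarrow L^6(\RT)$ do the rest. First I would write down the Nehari identity satisfied by the $\eps$-solution,
\[
\irt |\n u_\eps|^2+\eps u_\eps^2 +2e\o\phi_{u_\eps} u_\eps^2 - e^2\phi_{u_\eps}^2 u_\eps^2=\irt f'(u_\eps)u_\eps,
\]
and discard the manifestly nonnegative terms on the left. Indeed, by Lemma \ref{le:phi} one has $0\le e\phi_{u_\eps}\le\o$ on $\{u_\eps\neq 0\}$, so the interaction term factorises as $e\phi_{u_\eps}(2\o-e\phi_{u_\eps})u_\eps^2\ge 0$, and of course $\eps u_\eps^2\ge 0$. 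This yields the one-sided estimate
\[
\irt |\n u_\eps|^2 \;\le\; \irt f'(u_\eps)u_\eps,
\]
with a constant (namely $1$) that does not depend on $\eps$.

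The crux is then to bound the right-hand side from above by a power of $\|u_\eps\|_\D$. The key elementary observation is that, because $p<6<q$, the growth condition (f4) gives $|f'(t)t|\le C_2\min(|t|^p,|t|^q)\le C_2|t|^6$ for every $t\in\R$: for $|t|\le 1$ the minimum equals $|t|^q\le|t|^6$, while for $|t|\ge 1$ it equals $|t|^p\le|t|^6$. Combined with the Sobolev inequality $\irt|u|^6\le C_S\big(\irt|\n u|^2\big)^3$, this produces
\[
\irt f'(u_\eps)u_\eps \;\le\; C_2\irt|u_\eps|^6 \;\le\; C_2C_S\,\|u_\eps\|_\D^{6},
\]
again with $\eps$-independent constants.

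To finish I would chain the two displayed inequalities. Setting $X=\|u_\eps\|_\D^2=\irt|\n u_\eps|^2$, which is strictly positive because $\eps$-solutions are nontrivial ($u_\eps\in\H\setminus\{0\}$), the two estimates read $X\le C_2C_S\,X^{3}$; dividing by $X>0$ gives $X^{2}\ge (C_2C_S)^{-1}$, hence $\|u_\eps\|_\D$ is bounded below by a positive constant independent of $\eps$, which is the first assertion. Substituting this lower bound back into the first displayed inequality yields $\irt f'(u_\eps)u_\eps\ge X\ge C>0$, which is exactly \eqref{eq:f'}.

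I do not expect a genuine obstacle: this is a standard Nehari-plus-Sobolev lower bound. The only point that must be watched is uniformity in $\eps$, and it is automatic here, since neither the Sobolev constant $C_S$, nor $C_2$, nor the positivity furnished by Lemma \ref{le:phi} depends on $\eps$. Conceptually, the whole argument is the payoff of the supercritical-at-zero/subcritical-at-infinity hypotheses: their sole purpose at this stage is to make $f'(t)t$ controllable by the single critical power $|t|^6$ in the elementary inequality above.
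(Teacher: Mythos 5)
Your proof is correct and follows essentially the same route as the paper: the Nehari identity, dropping the nonnegative terms $\eps u_\eps^2$ and $e\phi_{u_\eps}(2\o-e\phi_{u_\eps})u_\eps^2$ via Lemma \ref{le:phi}, the pointwise bound $|f'(t)t|\le C_2|t|^6$ from ({\bf f4}) with $p<6<q$, and the critical Sobolev embedding. Your write-up is if anything slightly more careful than the paper's (which silently absorbs the constant $C_2$), so there is nothing to add.
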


\begin{proof}
Since $u_\eps$ is solution of \eqref{KGMeps}, using \eqref{eq:phi}, we have
\begin{align*}
\irt |\n u_\eps|^2&\le \irt |\n u_\eps|^2+\eps u_\eps^2 + 2e\o\phi_{u_\eps} u_\eps^2
-e^2\phi_{u_\eps}^2 u^2_\eps=\irt f'(u_\eps)u_\eps\\
&\le \irt |u_\eps|^6\le C \left(\irt |\n u_\eps|^2\right)^3
\end{align*}
and so we get the conclusion.
\end{proof}
We need a uniform boundedness estimate on the family of the
$\eps-$solutions, letting $\eps$ go to zero.
\\
Actually, we have the following result
\begin{lemma}
There exists a positive constant $C$ which is a uniform upper bound for
the family $(u_\eps,\phi_{u_\eps})_{\eps>0}$ in the
$\D\times\D-$norm.
\end{lemma}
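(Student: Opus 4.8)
The plan is to derive, for each $\eps$-solution $u_\eps$, three scalar relations and then combine them so as to eliminate the nonlinear quantities $\irt f(u_\eps)$ and $\irt f'(u_\eps)u_\eps$, ending with a coercive bound on $\|\n u_\eps\|_2$ and $\|\n\phi_{u_\eps}\|_2$. The three relations are: the energy level $I_\eps(u_\eps)=c_\eps$, together with $c_\eps<C$ from Lemma \ref{le:PM}; the Nehari identity $\irt|\n u_\eps|^2+\eps u_\eps^2+2e\o\phi_{u_\eps}u_\eps^2-e^2\phi_{u_\eps}^2u_\eps^2=\irt f'(u_\eps)u_\eps$, coming from $u_\eps\in\Ne_\eps$; and the Pohozaev identity for \eqref{KGMeps}, which, following the computation of \cite{DM2} (with $\eps$ in place of $m^2-\o^2$ and $f$ in place of the homogeneous potential), reads
\[
\irt|\n u_\eps|^2+3\eps u_\eps^2+5e\o\phi_{u_\eps}u_\eps^2-2e^2\phi_{u_\eps}^2u_\eps^2-6\irt f(u_\eps)=0.
\]
To these I would add two structural facts. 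Testing \eqref{seconda} against $\phi_{u_\eps}$ gives $\|\n\phi_{u_\eps}\|_2^2=e\o\irt\phi_{u_\eps}u_\eps^2-e^2\irt\phi_{u_\eps}^2u_\eps^2$, which is nonnegative thanks to the bound $0\le\phi_{u_\eps}\le\o/e$ of \eqref{eq:phi}; and hypothesis ({\bf f2}) integrates to $\irt f'(u_\eps)u_\eps\ge\alpha\irt f(u_\eps)$ with $\alpha>4$.

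Abbreviating $A=\irt|\n u_\eps|^2$, $D=e\o\irt\phi_{u_\eps}u_\eps^2$, $E=e^2\irt\phi_{u_\eps}^2u_\eps^2$ and $G=D-E=\|\n\phi_{u_\eps}\|_2^2\ge0$, I would first solve the Pohozaev identity for $\irt f(u_\eps)$ and substitute it into the doubled energy identity $2I_\eps(u_\eps)=2c_\eps$. The terms carrying $\eps u_\eps^2$ cancel exactly, and one is left with $\tfrac23(A-G)=2c_\eps$, that is
\[
A-G\le 3C.
\]
Next, inserting the Pohozaev expression for $\irt f(u_\eps)$ into ({\bf f2}) combined with the Nehari identity produces, after clearing denominators, the homogeneous inequality
\[
(6-\alpha)A+(6-3\alpha)\bigl(\eps\irt u_\eps^2\bigr)+(12-5\alpha)G+(6-3\alpha)E\ge0.
\]
Here the constraint $4<\alpha\le p<6$ is decisive: every coefficient except that of $A$ is negative, so discarding the nonnegative contributions of $\eps\irt u_\eps^2$ and $E$ and rearranging yields $(6-\alpha)A\ge(5\alpha-12)G$, i.e. $G\le\frac{6-\alpha}{5\alpha-12}\,A$.

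Finally, feeding this into $A-G\le 3C$ and using $\alpha>3$, the factor $1-\frac{6-\alpha}{5\alpha-12}=\frac{6(\alpha-3)}{5\alpha-12}$ is strictly positive, whence
\[
A\le\frac{5\alpha-12}{2(\alpha-3)}\,C,
\]
a bound independent of $\eps$; then $G\le\frac{6-\alpha}{5\alpha-12}A$ is bounded as well. Since $\|u_\eps\|_{\D}^2=A$ and $\|\phi_{u_\eps}\|_{\D}^2=G$, this is exactly the claimed uniform $\D\times\D$ estimate. The main obstacle I anticipate is twofold. First, one must establish the Pohozaev identity for the coupled system with a nonconstant $\phi_{u_\eps}$ and a general $f$: the coefficients $5$ and $-2$ in front of the $\phi$-terms are the delicate point and rest on the argument of \cite{DM2}. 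Second, the whole scheme hinges on $\alpha<6$, which makes $6-\alpha>0$ and renders the final combination coercive; this is guaranteed precisely by $4<\alpha\le p<6$. It is worth stressing that the nonnegative quantity $\eps\irt u_\eps^2$ is never estimated — consistently with working in $\D$, no attempt is made to control $\|u_\eps\|_2$, which may diverge as $\eps\to0$.
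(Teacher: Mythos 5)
Your argument is correct, but it takes a genuinely different and noticeably heavier route than the paper's. The paper never invokes the Pohozaev identity in this lemma: it only combines the two relations you list first, computing $\a\, c_\eps=\a I_\eps(u_\eps)-I_\eps'(u_\eps)[u_\eps]$, dropping the nonnegative terms $\irt e^2\phi_{u_\eps}^2u_\eps^2$ and $\irt\bigl(f'(u_\eps)u_\eps-\a f(u_\eps)\bigr)$ (the latter by ({\bf f2})), which leaves
\[
\Bigl(\frac{\a}{2}-1\Bigr)\irt |\n u_\eps|^2+\eps u_\eps^2+\Bigl(\frac{\a}{2}-2\Bigr)\irt e\o\phi_{u_\eps}u_\eps^2\le \a C;
\]
since $\a>4$ both coefficients are positive, so $\|\n u_\eps\|_2^2$ and $\irt e\o\phi_{u_\eps}u_\eps^2$ are bounded, and testing the second equation of \eqref{KGMeps} against $\phi_{u_\eps}$ converts the latter bound into one for $\|\n\phi_{u_\eps}\|_2^2$. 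Your scheme reaches the same conclusion but needs a third ingredient, the Pohozaev identity for \eqref{KGMeps} with a general $f$ and nonconstant $\phi_{u_\eps}$ --- precisely the step you flag as delicate, and one the paper only establishes (by citation to \cite{DM2}) in the homogeneous setting of Section 2; it also relies on the upper restriction $\a<6$ to make the coefficient $6-\a$ positive, whereas the paper's proof of this particular lemma uses only $\a>4$. What your route buys in exchange is the cleaner identity $\|\n u_\eps\|_2^2-\|\n\phi_{u_\eps}\|_2^2=3c_\eps$, which is more information than the statement requires. Your algebra (the cancellation of the $\eps\irt u_\eps^2$ terms, the signs of the coefficients under $4<\a<6$, and the final coercive combination with factor $6(\a-3)/(5\a-12)$) all checks out; if you want the shortest self-contained proof, drop the Pohozaev identity and use the $\a I_\eps-I_\eps'[\cdot]$ combination directly.
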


\begin{proof}
We have
\begin{align*}
\frac 12 \irt |\n u_\eps|^2 + \eps u_\eps^2
+ e\o \phi_{u_\eps} u^2_\eps -\irt f(u_\eps)&=c_\eps,
\\
\irt |\n u_\eps|^2+\eps u_\eps^2
+2e\o \phi_{u_\eps} u^2_\eps
-e^2 \phi_{u_\eps}^2 u_\eps^2
-\irt f'(u_\eps)u_\eps &=0.
\end{align*}
By Lemma \ref{le:PM} and ({\bf f2}) we deduce that
\begin{equation} \label{eq:bdd1}
\left(\frac \a 2 -1\right)\irt |\n u_\eps|^2 + \eps u_\eps^2
+\left(\frac \a 2-2\right) \irt e\o\phi_{u_\eps} u^2_\eps \le C,
\end{equation}
while, by the second equation of \eqref{KGMeps}, we have
\begin{equation}\label{eq:bdd2}
\irt |\n \phi_{u_\eps}|^2 +e^2\phi_{\eps}^2 u_\eps^2= \irt e\o\phi_{u_\eps}{u_\eps}^2.
\end{equation}
Combining together \eqref{eq:bdd1} and \eqref{eq:bdd2}, we infer that $(u_\eps,\phi_{u_\eps})_{\eps>0}$ is bounded in the
$\D\times\D-$norm.
\end{proof}

Now we deduce that, for any $\eps_n\to 0,$ there exist a
subsequence of $(u_{\eps_n},\phi_{u_{\eps_n}})_n$ (which we
relabel in the same way), and $(u_0,\phi_0)\in\D\times\D$ such
that
    \begin{align*}
        u_{\eps_n}&\rightharpoonup u_0, \quad\hbox{ in } \D,\\
        \phi_{u_{\eps_n}}&\rightharpoonup \phi_0, \quad\hbox{ in } \D.
    \end{align*}

We want to show that, if the sequence $(u_{\eps_n},\phi_{u_{\eps_n}})_n$ concentrates, then $(u_0,\phi_0)$ is a
weak nontrivial solution of \eqref{KGM}. From now on, we use $u_n$ and $\phi_n$ in the place of
$u_{\eps_n}$ and $\phi_{u_{\eps_n}}.$

\

%\\
%In the theorem we are going to state, we use a classical
%nonvanishing argument (see \cite{L1}) to show that the sequence $(u_n,\phi_n)_n$ does
%not weakly converge to the couple $(0,0).$
%
%\begin{lemma}
%If $(u_n,\phi_n)_n$ is a sequence of $\eps_n-$solutions, then there exists $r>0$ such that
%\begin{equation*}
%\limsup_n \sup_{\xi \in \RT}\int_{B_r(\xi)} u_n^2>0.
%\end{equation*}
%\end{lemma}
%\begin{proof}
%If the conclusion was false, then for any $r>0$ we should
%have that
%\begin{equation*}
%\lim_n \sup_{\xi \in \RT}\int_{B_r(\xi)}
%u_n^2=0.
%\end{equation*}
%By \cite[Lemma 2]{ABDF}, the previous equality would imply
%that $\irt f'(u_n)u_n\to 0,$ and so
%\begin{equation*}
%\|u_n\|_{\D}^2\le\irt |\n u_n|^2+\eps_n u_n^2
%+2e\o \phi_{n} u^2_n
%-e^2 \phi_n^2 u_n^2=\irt f'(u_n)u_n\to 0.
%\end{equation*}
%This would be in contradiction with Lemma \ref{le:>C}.
%\end{proof}

Now we can prove the existence result in the limit case.

\begin{proofmain}
By \cite[Lemma 13]{BF} and \cite[Proposition 24]{P}, and by \eqref{eq:f'}, we have that
\[
\irt f'(u_0)u_0 =\lim_n \irt f'(u_n)u_n \ge C>0,
\]
and so $u_0\neq 0.$

Let us show that $(u_0,\phi_0)$ is a weak solution of \eqref{KGM}, namely
\begin{align*}
\irt \n u_0 \cdot \n \psi + 2 e\o\phi_0 u_0 \psi -e^2\phi_0^2 u_0 \psi &= \irt f'(u_0)\psi,
\\
\irt \n \phi_0 \cdot \n \psi +e^2\phi_0 u_0^2 \psi&=\irt e\o u_0^2 \psi ,
\end{align*}
for any $\psi$ test function.

Since, for any $n\ge 1$,  $(u_n,\phi_n)$ is a solution of \eqref{KGMeps}, we have
\begin{align*}
\irt \n u_n \cdot \n \psi +\eps_n u_n \psi + 2 e\o\phi_n u_n \psi -e^2\phi_n^2 u_n \psi&= \irt f'(u_n)\psi, %\label{eq:test1}
\\
\irt \n \phi_n \cdot \n \psi+e^2\phi_n u_n^2 \psi&=\irt e\o u_n^2 \psi . %\label{eq:test2}
\end{align*}
Let us prove that
\begin{equation}\label{eq:psi1}
\irt \phi_n u_n \psi \to \irt \phi_0 u_0 \psi.
\end{equation}
Indeed, denoting with $K=Supp(\psi)$, we observe that
\begin{align*}
\left| \irt \phi_n u_n \psi - \phi_0 u_0 \psi\right|
&\le \irt |\phi_n u_n \psi - \phi_n u_0 \psi|
+\irt |\phi_n u_0 \psi - \phi_0 u_0 \psi|
\\
& \le \irt |\phi_n||u_n-u_0| |\psi| +\irt |\phi_n-\phi_0||u_0||\psi|
\\
& \le \left( \irt |\phi_n|^6 \right)^{\frac 16}
\left( \int_{K} |u_n-u_0|^\frac65 \right)^{\frac 56}\sup |\psi|
\\
&\quad+\left( \int_{K} |\phi_n-\phi_0|^\frac65 \right)^{\frac 56}
\left( \irt |u_0|^6 \right)^{\frac 16}\sup |\psi|,
\end{align*}
and so we get \eqref{eq:psi1}, since $u_n\rightharpoonup u_0$ and $\phi_n\rightharpoonup\phi_0$ in $H^1(K)$.
\\
Let us prove that
\begin{equation}\label{eq:psi2}
\irt \phi_n^2 u_n \psi \to \irt \phi_0^2 u_0 \psi.
\end{equation}
Indeed, we have
\begin{align*}
\left|\irt \phi_n^2 u_n \psi  - \phi^2_0 u_0 \psi\right|&
\le \irt \phi^2_n |u_n-u_0| |\psi |
+ \irt |\phi^2_n-\phi^2_0| |u_0| |\psi|
\\
&\le \left( \irt |\phi_n|^6 \right)^{\frac 16} \left(\int_K |u_n-u_0|^{\frac 3 2}\right)^{\frac 2 3} \sup |\psi|
\\
&\quad+ \left(\int_K |\phi_n^2-\phi_0^2|^{\frac 65}\right)^{\frac 56} \left( \irt |u_0|^6 \right)^{\frac 16} \sup |\psi|
\\
&= o_n(1).
\end{align*}
\\
Therefore, by \eqref{eq:psi1} and \eqref{eq:psi2} and since $\psi$ has compact support, we have
\begin{equation*}%  \label{eq:}
\begin{array}{ccccccccc}
\underbrace{\irt \n u_n \cdot \n\psi}_{\downarrow}
&+
&\!\!\underbrace{\irt \eps_n u_n\psi}_{\downarrow}\!\!
&+
&\underbrace{\irt 2e\o \phi_n u_n \psi}_{\downarrow}
&-
&\!\!\!\underbrace{\irt e^2\phi_n^2 u_n \psi}_{\downarrow}
&=
&\!\!\underbrace{\irt f'(u_n)\psi}_{\downarrow},
\\
\displaystyle \irt \n u_0 \cdot \n\psi
&+
&0
&+
&\displaystyle \irt 2e\o\phi_0 u_0\psi
&-
&\displaystyle \irt e^2\phi^2_0 u_0\psi
&=
&\!\!\displaystyle \irt f'(u_0)\psi.
\end{array}
\end{equation*}
Analogously, we have
\begin{equation*}%  \label{eq:}
\begin{array}{ccccc}
\underbrace{\irt \n \phi_n \cdot \n\psi}_{\downarrow}
&+
&\!\!\!\underbrace{\irt e^2\phi_n u_n^2 \psi}_{\downarrow}
&=
&\!\!\underbrace{\irt u_n^2\psi}_{\downarrow},
\\
\displaystyle \irt \n \phi_0 \cdot \n\psi
&+
&\displaystyle \irt e^2\phi_0 u_0^2\psi
&=
&\!\!\displaystyle \irt u_0^2\psi.
\end{array}
\end{equation*}
In particular, by this last identity, we infer that $\phi_0\neq0$ and we conclude.
\end{proofmain}

\appendix
\section{Appendix}

\begin{lemma}\label{le:para}
Let $p\in(2,4)$ and $\omega\in(0,g(p)\,m)$. Then there exists
$\alpha\in I_{p}=\left(  \frac{2-p}{2(6-p)},\frac{1}{6}\right)  $ such that
\begin{equation*}%\label{eq:para}
A_{p,\a}e^2 \phi_{v_n}^2
+B_{p,\a}e\o\phi_{v_n}
+ C_{p,\a}\O\ge 0,
\end{equation*}
where
\begin{align*}
A_{p,\alpha}  & =\frac{1+2\alpha\left(  p-3\right)  }{p},\\
B_{p,\alpha}  & =\frac{p-10\alpha p-4+24\alpha}{2p},\\
C_{p,\alpha}  & =\frac{\left(  p-2\right)  (1-6\alpha)}{2p}.
\end{align*}
\end{lemma}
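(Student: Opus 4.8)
The plan is to reduce the pointwise inequality to a one-variable problem. By Lemma~\ref{le:phi}, on the set $\{v_n\neq 0\}$ we have $0\le e\phi_{v_n}\le\omega$, so it suffices to prove that the quadratic
\[
q(s):=A_{p,\alpha}s^2+B_{p,\alpha}\,\omega\, s+C_{p,\alpha}\,\Omega
\]
(with $\Omega=m^2-\omega^2>0$) is nonnegative for all $s\in[0,\omega]$, and then to specialise $s=e\phi_{v_n}(x)$. Since the hypothesis $\omega<g(p)m$ means $\omega^2/m^2<g(p)^2$, it is enough to produce a single $\alpha\in I_p$ for which $q\ge 0$ on $[0,\omega]$.

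First I would record the elementary structural facts. For $\alpha\in I_p$ one has $\alpha<1/6$, hence $C_{p,\alpha}>0$ (recall $p>2$), and $1+2\alpha(p-3)>0$, hence $A_{p,\alpha}>0$: thus $q$ opens upward with $q(0)=C_{p,\alpha}\Omega>0$. A one-line computation yields the identity $A_{p,\alpha}+B_{p,\alpha}=C_{p,\alpha}$, so that $q(\omega)=C_{p,\alpha}m^2>0$; similarly $B_{p,\alpha}+2A_{p,\alpha}=(1-2\alpha)/2>0$, which places the vertex to the left of $\omega$. Because $A_{p,\alpha}>0$, the condition $q\ge 0$ on all of $\R$ (hence on $[0,\omega]$) is equivalent to a nonpositive discriminant, i.e.
\[
\frac{\omega^2}{m^2}\le h(\alpha):=\frac{4A_{p,\alpha}C_{p,\alpha}}{B_{p,\alpha}^2+4A_{p,\alpha}C_{p,\alpha}}.
\]

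The heart of the matter is then to show $\sup_{\alpha\in I_p}h(\alpha)=g(p)^2$. Exploiting $A+B=C$, the substitution $x=B_{p,\alpha}/A_{p,\alpha}$ rewrites $h=4(1+x)/(x+2)^2$, a function that increases on $(-1,0)$ and attains its maximum $1$ at $x=0$, i.e.\ at $B_{p,\alpha}=0$. The root $\alpha_*=\frac{4-p}{24-10p}$ of $B_{p,\alpha}=0$ falls inside $I_p$ precisely when $3<p<4$; there $\sup h=1=g(p)^2$ and $q\ge 0$ for every $\omega<m$. For $2<p\le 3$ instead $B_{p,\alpha}<0$ throughout $I_p$, so $x\in(-1,0)$ and the $x$-analysis shows the supremum of $h$ occurs at the left endpoint $\alpha_L=\frac{2-p}{2(6-p)}$. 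Evaluating there gives $A=\frac{4-p}{6-p}$, $C=\frac{p-2}{6-p}$, $B=\frac{2(p-3)}{6-p}$, and the cancellation $(p-3)^2+(4-p)(p-2)=1$ collapses $h(\alpha_L)$ to exactly $(p-2)(4-p)=g(p)^2$. As $\alpha_L\notin I_p$ while $\omega^2/m^2<g(p)^2=\sup h$, picking $\alpha$ close enough to $\alpha_L$ (or $\alpha=\alpha_*$ when $p>3$) gives $h(\alpha)>\omega^2/m^2$, completing the argument.

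I expect the optimization in the previous paragraph to be the main obstacle: the whole proof turns on recognizing that $\sup_{I_p}h$ equals $g(p)^2$, and it is exactly this computation that reproduces the piecewise shape of $g$, namely the interior maximum $1$ when $p>3$ against the boundary value $(p-2)(4-p)$ when $p<3$. The saving grace is the identity $(p-3)^2+(4-p)(p-2)=1$, which I would isolate in the endpoint evaluation; everything else (the signs of $A,C$, the identities $A+B=C$ and $B+2A=(1-2\alpha)/2$, and the discriminant reformulation) is routine.
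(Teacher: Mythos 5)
Your proof is correct and follows essentially the same route as the paper: the paper minimizes $K_p(\alpha)=\frac{(A_{p,\alpha}+C_{p,\alpha})^2}{4A_{p,\alpha}C_{p,\alpha}}$, which by the identity $A_{p,\alpha}+B_{p,\alpha}=C_{p,\alpha}$ is exactly the reciprocal of your $h(\alpha)$, uses the same interior zero $\alpha_*=\frac{4-p}{24-10p}$ of $B_{p,\alpha}$ for $3<p<4$, and obtains the same limiting value $(p-2)(4-p)$ as $\alpha\to\frac{2-p}{2(6-p)}$ for $2<p\le 3$. The only step you leave implicit is that locating $\sup h$ at the left endpoint requires $\alpha\mapsto B_{p,\alpha}/A_{p,\alpha}$ to be monotone (the paper checks the corresponding monotonicity of $K_p$ via its factorization into increasing factors), but your conclusion does not actually need this: continuity of $h$ at $\alpha_L$ together with your endpoint computation $h(\alpha_L)=(p-2)(4-p)$ already produces the required $\alpha$.
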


\begin{proof}
Keeping in mind \eqref{eq:phi}, we have to show that
\begin{equation}\label{eq:f}
f(t)=A_{p,\a} t^2+B_{p,\a} \o t +C_{p,\a}\O\ge 0, \qquad \hbox{for any }t\in [0,\o].
\end{equation}
First we notice that for any $\alpha\in I_{p}$%
\begin{align*}
A_{p,\alpha}  & >0\\
C_{p,\alpha}  & >0
\end{align*}
\\
Now we have to distinguish two cases: $p\in(3,4)$ and $p\in(2,3]$.
\\
In the first one, if $\a=\frac{4-p}{24-10p}\in I_p$, we have $B_{p,\a}=0$ and so we have proved \eqref{eq:f}.
\\
Let now consider the case $p\in(2, 3]$. Since $f$ reaches its minimum in $-\frac{B_{p,\a}\o}{2A_{p,\a}}$ and it belongs to $[0,\o]$, $f$ is non-negative in $[0,\o]$ if and only if
\[
f\left(-\frac{B_{p,\a}\o}{2A_{p,\a}}\right)\ge 0,
\]
and, with straightforward calculations and using the fact that $A_{p,\a}+B_{p,\a}=C_{p,\a}$, this is equivalent to say that
\begin{equation}\label{eq:AC}
\frac{m^2}{\o^2}\ge \frac{(A_{p,\a}+C_{p,\a})^2}{4A_{p,\a}C_{p,\a}}.
\end{equation}
We set
\[
K_p(\a):= \frac{(A_{p,\a}+C_{p,\a})^2}{4A_{p,\a}C_{p,\a}}=\frac{p^2}{8(p-2)}\cdot\frac{(1-2\a)^2}{1-6\a}\cdot\frac{1}{1+2\a(p-3)}
\]
and we shall prove that
\begin{equation}\label{eq:Kp}
\inf_{\a\in I_p}K_p(\a)=\frac{1}{(p-2)(4-p)},
\end{equation}
and then we could conclude. Indeed, if $\o\in (0,g(p)m)$, then by \eqref{eq:Kp}
\[
\frac{m^2}{\o^2}>\inf_{\a\in I_p}K_p(\a),
\]
and so there exists $\a\in I_p$ such that
\[
\frac{m^2}{\o^2}\ge K_p(\a),
\]
by which we deduce \eqref{eq:AC}.
\\
Let us now prove \eqref{eq:Kp}.
\\
Let us consider the case $p=3$: in such situation
\[
K_3(\a)=\frac{9}{8}\cdot\frac{(1-2\a)^2}{1-6\a} \quad \hbox{ and }\quad I_3=\left(-\frac 16,\frac 16\right).
\]
Since the function $K_3$ is increasing in $I_3$, we have
\begin{equation*}%\label{eq:K3}
\inf_{\a\in I_3}K_3=K_3\left(-\frac 16 \right)=1
\end{equation*}
and so we have proved \eqref{eq:Kp}.
\\
Now, let us consider the case $p\in(2,3)$. We write $K_p(\a)= \frac{p^2}{8(p-2)}\cdot H_1(\a) \cdot H_2(\a)$ where
\[
H_1(\a):=\frac{(1-2\a)^2}{1-6\a}, \qquad H_2(\a):=\frac{1}{1+2\a(p-3)}.
\]
Since for $i=1,2$, $H_i$ is a positive and increasing function in $I_p$, we have
\[
\inf_{\a\in I_p}K_p=K_p\left(\frac{2-p}{2(6-p)}\right)=\frac{1}{(p-2)(4-p)},
\]
and so we obtain \eqref{eq:Kp}.
\end{proof}


\begin{thebibliography}{99}


\bibitem{AR}
A. Ambrosetti, P. Rabinowitz, {\it Dual variational methods in critical point theory and applications}, J. Func. Anal. {\bf 14} (1973), 349--381.

%\bibitem{ABDF}
%A. Azzollini, V. Benci, T. D'Aprile, D. Fortunato, {\it Existence
%of static solutions of the semilinear Maxwell equations}, Ricerche
%di Matematica, {\bf 55}, (2006), 283--297.

\bibitem{AP}
A. Azzollini, A. Pomponio, {\it Ground state solutions for the nonlinear Klein-Gordon-Maxwell equations}, to appear on
Topol. Methods Nonlinear Anal.

\bibitem {BF2}
V. Benci, D. Fortunato, {\it Solitary waves of the nonlinear
Klein-Gordon field equation coupled with the Maxwell equations,}
Rev. Math. Phys. {\bf 14} (2002), 409--420.


\bibitem{BF}
V. Benci, D. Fortunato, {\it Towards a unified theory for classical electrodynamics},
Arch. Rational Mech. Anal., {\bf 173}, (2004), 379–-414.


\bibitem {bf5}
V. Benci and D. Fortunato,
\textit{Solitary waves in abelian gauge theories},
{Adv. Nonlinear Stud.}, \textbf{8} (2008), 327--352.

\bibitem{BL1}
H. Berestycki, P.L. Lions, {\it Nonlinear scalar field equations,
I - Existence of a ground state}, Arch. Rational Mech. Anal., {\bf
82}, (1983), 313--345.

\bibitem{C}
D. Cassani, {\it Existence and non-existence of solitary waves for
the critical Klein-Gordon equation coupled with Maxwell's
equations}, Nonlinar Anal., {\bf 58} (7-8) (2004), 733--747.

\bibitem {DM}
T. D'Aprile, D. Mugnai, {\it Solitary waves for nonlinear
Klein-Gordon-Maxwell and Schr\"{o}dinger-Maxwell equations},
Proc. Roy. Soc. Edinburgh Sect. A, {\bf 134}, (2004), 893--906.

\bibitem{DM2}
T. D'Aprile, D. Mugnai, {\it Non-existence results for the coupled Klein-Gordon-Maxwell equations},
Adv. Nonlinear Stud., {\bf 4}, (2004), 307--322.

\bibitem{dps}
P. d'Avenia, L. Pisani, G. Siciliano, {\it Klein-Gordon-Maxwell systems in a bounded domain},
Discrete Contin. Dyn. Syst., to appear.

\bibitem{dps1}
P. d'Avenia, L. Pisani, G. Siciliano, {\it Dirichlet and Neumann problems for Klein-Gordon-Maxwell systems}, Nonlinear Anal., to appear.

\bibitem {F}
B. Felsager, Geometry, Particle and fields,
\emph{Springer Verlag, New York}, 1998.

\bibitem {Fort}
D. Fortunato, {\it Solitary Waves and Electromagnetic Field},
Boll. Unione Mat. Ital. (9) {\bf I} (2008), 767-789.

\bibitem{JJ}
L. Jeanjean, {\it On the existence of bounded Palais-Smale
sequences and application to a Landesman-Lazer-type problem set on
$\RN$}, Proc. R. Soc. Edinb., Sect. A, Math., {\bf 129}, 787--809,
(1999).

%
%\bibitem{L1}
%P.L. Lions,
%\textit{The concentration-compactness principle in the calculus of variation.
%The locally compact case. Part I},
%Ann. Inst. Henri Poincar\'e, Anal. Non Lin\'eaire, {\bf 1}, (1984), 109--145.


\bibitem{Lo}
E. Long, {\it Existence and stability of solitary waves in
non-linear Klein-Gordon-Maxwell equations}, Rev. Math. Phys., {\bf
18} (2006), 747--779.

\bibitem{P}
L. Pisani, {\it Remarks on the sum of Lebesgue spaces}, preprint.

\bibitem{S}
M. Struwe,
{\it The existence of surfaces of constant mean curvature with free boundaries},
 Acta Math.  {\bf 160} (1988),  no. 1-2, 19--64.

\end{thebibliography}
\end{document}